\newtheorem{thm}{Theorem}[section]
\newtheorem{lem}[thm]{Lemma}
\newtheorem{prop}[thm]{Proposition}
\theoremstyle{definition}
\newtheorem{Def}[thm]{Definition}
\newtheorem{rem}[thm]{Remark}
\newtheorem*{ack}{Acknowledgement}
\numberwithin{equation}{section}
\numberwithin{figure}{section}
\def\Hom{{\text{\rm{Hom}}}}
\def\trace{{\text{\rm{trace}}}}
\def\tr{{\text{\rm{tr}}}}
\def\rchi{{\hbox{\raise1.5pt\hbox{$\chi$}}}}
\def\isom{\cong}
\def\tensor{\otimes}
\def\a{\alpha}
\def\lam{\lambda}
\def\gam{\gamma}
\def\Jac{{\text{\rm{Jac}}}}
\def\Spec{{\text{\rm{Spec}}}}
\newcommand{\bea}{\begin{eqnarray}}
\newcommand{\eea}{\end{eqnarray}}
\newcommand{\be}{\begin{equation}}
\newcommand{\ee}{\end{equation}}
\newcommand{\Mbar}{{\overline{\mathcal{M}}}}
\newcommand{\bC}{{\mathbb{C}}}
\newcommand{\bZ}{{\mathbb{Z}}}
\newcommand{\cD}{{\mathcal{D}}}
\newcommand{\cE}{{\mathcal{E}}}
\newcommand{\cH}{{\mathcal{H}}}
\newcommand{\cL}{{\mathcal{L}}}
\newcommand{\cO}{{\mathcal{O}}}
\newcommand{\cU}{{\mathcal{U}}}
\newcommand{\la}{{\langle}}
\newcommand{\ra}{{\rangle}}
\newcommand{\half}{{\frac{1}{2}}}
\newcommand{\rar}{\rightarrow}
\newcommand{\lrar}{\longrightarrow}
\begin{document}
\large
\setcounter{section}{0}

\title[Quantum curves for Hitchin fibrations and 
Eynard-Orantin theory]
{Quantum curves for Hitchin fibrations and 
the Eynard-Orantin theory}

\author[O.\ Dumitrescu]{Olivia Dumitrescu}
\address{
Institut f\"ur Algebraische Geometrie\\
Fakult\"at f\"ur Mathematik und Physik\\
Leibniz Universit\"at Hannover\\
Welfengarten 1\\
30167 Hannover, Germany}
\email{dumitrescu@math.uni-hannover.de}

\author[M.\ Mulase]{Motohico Mulase}
\address{
Department of Mathematics\\
University of California\\
Davis, CA 95616--8633, U.S.A.}
\email{mulase@math.ucdavis.edu}

\begin{abstract}
We generalize the topological recursion of 
Eynard-Orantin \cite{CEO, EO1}  to 
the family of
spectral curves of Hitchin fibrations.
A \emph{spectral curve}
in the topological recursion, which is
defined to be a complex plane curve,
 is replaced with a
generic curve in the cotangent bundle $T^*C$ of 
an arbitrary smooth base curve $C$.  
We then prove that these spectral curves
are quantizable, using the new  formalism.
More precisely, we construct
the canonical generators of the 
formal $\hbar$-deformation family of 
$D$-modules over an arbitrary 
projective algebraic curve $C$
of genus greater than $1$, from the geometry
of a prescribed family of smooth  
Hitchin spectral curves associated with 
the $SL(2,\bC)$-character variety of the 
fundamental group $\pi_1(C)$.
We show that the semi-classical limit through the 
WKB approximation of these $\hbar$-deformed
$D$-modules recovers the initial 
family of Hitchin spectral curves. 
\end{abstract}

\thanks{The first author is a member of 
the Simion Stoilow Institute of Mathematics of the 
Romanian Academy}

\subjclass[2010]{Primary: 14H15, 14H60, 14H81;
Secondary: 34E20, 81T45}

\keywords{Quantum curve; Hitchin fibration;
family of spectral curves; Higgs field;
topological recursion; WKB approximation}

\maketitle

\allowdisplaybreaks

\tableofcontents

\section{Introduction and the main results}
\label{sect:intro}

A \textbf{quantum curve} 
\cite{ADKMV, EB, DFM, DHS, DHSV, 
 EM, GS, Hollands}
is a magical object. It conjecturally captures 
information of  quantum topological invariants
in an effective and compact manner.
Mathematically, it
is a $D$-module 
defined on a formal family 
of complex holomorphic curves $C[[\hbar]]$ 
that \emph{quantizes} a \textbf{spectral curve} 
$\Sigma$.
It is automatically holonomic, and its 
\textbf{semi-classical limit}
through the \textbf{WKB approximation}
induces a 
holomorphic \emph{Lagrangian immersion} 
\begin{equation}
\label{eq:spectral}
\begin{CD}
\iota :\Sigma @>>> T^*C
\\
&&@VV{\pi}V
\\
&&C
\end{CD}
\end{equation}
with respect to the natural symplectic 
structure of the cotangent bundle $T^*C$.
It is also closely related to 
 an \emph{oper} of \cite{BD1,Frenkel},
 a $\lam$-\emph{connection} 
 of Deligne (see for example, \cite{A}),
 a \emph{quantum characteristic polynomial}
in the theory of integrable models
in statistical mechanics  
\cite{CT,T},  a \emph{Cherednik algebra}
\cite{EMa}, and the differential equation
appearing in the context of determining
the \emph{Nekrasov partition function}
\cite{N} through the \emph{AGT correspondence}
\cite{AGT,BMT,Gai}.

We note  that not every morphism of curves
$\Sigma\lrar C$ is  quantizable. Clearly we
need a Lagrangian immersion for the WKB 
method to work. Therefore, 
it is natural to ask what type of conditions
we need for the existence of quantization.

The purpose of this paper is to show
that the spectral curves
associated with 
$SL(2,\bC)$-\textbf{Hitchin fibrations}
\cite{H1,H2}
are quantizable, by concretely constructing
a canonical generator 
(which is related to the \emph{conformal 
block} in the context of  the AGT correspondence)
 of 
$\hbar$-deformed $D$-modules
on an arbitrary smooth projective 
algebraic curve $C$ of genus $g(C)\ge 2$.
For this  construction we first generalize 
the \emph{topological recursion}
mechanism proposed in \cite{EO1}, 
which is strictly restricted  to the case 
of $C=\bC$ or $C=\bC^*$, to what we
call the
\textbf{Eynard-Orantin theory}, making it
 applicable to the 
spectral curves of (\ref{eq:spectral})
with an arbitrary base curve $C$. We show 
that this new formalism allows us to 
construct the desired 
quantization of $\Sigma$.

Since our work connects many different 
developments  that took 
place in a vast array of mathematics
in recent years,
we present each component that forms our
work in this introductory section.

\subsection{Generalization of the 
topological recursion of Eynard and Orantin}
The Eynard-Orantin theory that we propose in 
this paper
stems out of various physics literature, including
\cite{EB, BKMP,CEO,DV2007, EO1,M2}. 
The key ingredient
in both Hitchin fibrations and the Eynard-Orantin
theory is the notion of spectral curves.
By generalizing the original topological recursion of 
\cite{EO1},
we shall show that these spectral 
curves  are exactly the same object.
As a consequence of this identification, we obtain
a purely geometric interpretation of 
\emph{what the topological recursion  does}.
More precisely, we construct a quantum curve
 when the
spectral curve (\ref{eq:spectral}) is non-singular and  
 $\pi:\Sigma \lrar C$ is 
a  ramified double-sheeted covering.
In this particular mechanism, the Eynard-Orantin
theory that we propose solves the all-order
\textbf{Wentzel-Kramers-Brillouin 
(WKB) approximation} 
(see for example, \cite{BO}). 
The mechanism works as follows.
\begin{itemize}
\item The spectral curves of Hitchin fibrations
are quantized by the WKB method.
\item The Eynard-Orantin theory gives a solution
to the \emph{all-order, exact}
 WKB approximation from the geometry 
of spectral curves as the initial condition.
\item Along the 
 branched points of $\pi: \Sigma\lrar C$, 
 the WKB method
does not work. Around these points, 
asymptotically, the
Eynard-Orantin theory gives the expected
\textbf{Airy function} solution 
 \cite{EB}, 
in the same
way as it appears in \cite{A}. 
This is because the local behavior of 
$\pi$ around a branched point is the double-sheeted
covering of a formal disc by another formal disc,
ramified at the origin.
The Airy function here
is representing the 
Witten-Kontsevich
theory of the 
cotangent $\psi$-class 
intersection numbers on $\Mbar_{g,n}$
 \cite{K1992,W1991} (see also \cite{DMSS}).
\end{itemize}
We note that the relation between Hitchin
systems and the WKB method is extensively
studied in
Gaiotto-Moore-Neitzke  \cite{GMN}
 and their subsequenct
papers.

The first 
formulation of the 
topological recursion in 
\cite{CEO,EO1}  assumes
 that the base curve $C$ is
always  the complex line $\bC$. 
A modification is proposed for 
the case of $C=\bC^*$ in 
\cite{BKMP, BM}.
Our current 
work  provides a  generalization of these
theories to \emph{compact} base curve $C$.
The original case is just a restriction of our
picture onto an affine piece of $C$. From this
point of view, 
we develop a \emph{global} topological
recursion, utilizing the full global
structure of the starting 
spectral curve.  The main technical difficulty
of the theory that we 
overcome in 
this paper is our global calculation of the residue
integrals
appearing
in the topological recursion formula.

When we consider a spectral curve
embedded in the principal $\bC^*$-bundle 
associated with $T^*C$, such as those 
we find in \cite{DFM, GS},
even though a similar
formalism works, the topological recursion
acquires a different mathematical flavor.
It is a relation to algebraic $K$-theory and
the Bloch regulator
appearing as a Bohr-Sommerfeld type quantization
condition
 described in \cite{ADKMV,GS}. We come back 
to this point later.

\subsection{Hitchin spectral curves}

In algebraic geometry, a spectral curve simply
means the diagram (\ref{eq:spectral}) for
an  algebraic curve $C$. The curve $\Sigma$ also
appress as the
\textbf{Seiberg-Witten curve} \cite{Gai}.
It is obvious that such a $\Sigma$ 
\emph{cannot} be the characteristic
variety of a $D$-module defined over 
the base curve $C$, because $\dim C=1$
and the characteristic varieties are necessarily
$\bC^*$ invariant with respect to the
$\bC^*$-action on $T^*C$.
To capture the geometry of a spectral curve,
we need to utilize Deligne's $\lam$-connections.
The idea of the $\lam$-connections is parallel to
that of  the WKB method in analysis. This is 
explained in Section~\ref{sect:lam}.

The notion of spectral curves was
 developed by Hitchin \cite{H1,H2} 
 in the
process of \textbf{Abelianization} of the 
moduli spaces of stable vector bundles on 
a  projective algebraic curve $C$
of genus greater than $1$
(see also \cite{BNR,DM,H,HT, LM2, S}).
Consider a \textbf{Higgs pair} $(E,\phi)$
consisting of a vector bundle $E$ of rank $r$ on 
$C$ and a \textbf{Higgs field} 
$\phi\in H^0(C,\cE nd(E)\tensor \Omega_C^1)$,
where $\Omega_C^1$ denotes the 
sheaf of holomorphic
$1$-forms on $C$. 
 The Higgs field here
is defined on a curve through the dimensional 
reduction of the Higgs boson 
\cite{Higgs} on a $4$-dimensional 
space-time. Let  $\eta$ be the tautological
$1$-form on $T^*C$ such that $-d\eta$ gives the
natural holomorphic symplectic form on $T^*C$.
Then the characteristic equation
$\det(\eta -\phi) = 0$
defines a spectral curve $\Sigma$ as the space
of eigenvalues of $\phi$. Under a good condition,
$\Sigma$ is nonsingular and the natural
projection $\pi:\Sigma\lrar C$ is a ramified
covering of degree $r$ with ramification 
divisor $R$. 
 In symplectic geometry, 
 a ramification point $p\in R$ is called 
 a \textbf{Lagrangian singularity}, and
 the branch divisor $\pi(R)\subset C$ the
 \textbf{caustics} of $\pi$.
Let $M\lrar \Sigma$ be the 
eigenspace bundle of the Higgs field on $\Sigma$,
and define $L=M\tensor \cO_\Sigma(R)$.
Then the original 
vector bundle $E$ is recovered by $E=\pi_* L$. 
The Abelianization refers to  the correspondence
$$
(C,E,\phi) \longleftrightarrow 
(\pi:\Sigma\lrar C, L,\iota^*\eta).
$$
Let us denote by
\begin{multline}
\label{eq:char coefficient}
s = (s_1,s_2,\dots,s_r) = 
\big(-\tr \phi, \tr(\wedge^2 \phi),
\dots,(-1)^r \tr(\wedge^r \phi)\big)
\\
\in V_{GL_r}^* :=
\bigoplus_{i=1}^r H^0\big(
C,(\Omega_C^1)^{\tensor i}\big)
\end{multline}
the characteristic coefficients of the Higgs field
$\phi$. 
The dual notation $^*$ on the vector space
is due to the analogy with the dual Lie algebra
we normally have as a target space of  a
moment map in real symplectic geometry.
In algebraic geometry, a family of groups can
act symplectomorphically, with the same Lie 
algebra. Here we have such a situation 
(see for example, \cite{HM}).
The notation $\tr(\wedge^i \phi)$ of a matrix
$\phi$ means the sum of all principal
$i\times i$-minors of $\phi$
that is considered as an element of 
the symmetric power
$H^0\big(
C,(\Omega_C^1)^{\tensor i}\big)$.
We are not talking about the exterior 
power $\phi\wedge \phi$ here, since all
higher exterior powers of $\phi$  
automatically vanish on $C$.
The global section
\begin{equation}
\label{eq:char equation}
\eta^{\tensor r} + \sum_{i=1} ^r
 \eta^{\tensor (r-i)}\tensor \pi^* s_i
\in 
H^0\big(T^*C\times V_{GL_r}^*,
\pi^*(\Omega_C^1)^{\tensor r}
\boxtimes \cO_{V_{GL_r}^*}\big)
\end{equation}
defines a family of spectral curves
\begin{equation}
\label{eq:family}
\begin{CD}
\Sigma_s\subset \widetilde{\Sigma}
 @>{\iota}>> T^*C\times V_{GL_r}^*@>>>
 V_{GL_r}^*
\\
&&@VV{\pi\times {\text{id}}}V
\\
C\times \{s\}@>>> C\times V_{GL_r}^*
\end{CD}
\end{equation}
on $V_{GL_r}^*$. The morphism
$\pi:\Sigma_s\lrar C$ has degree $r$.
When there is no need to specify the
rank $r$, we denote simply by
$V^*_{GL_r}=V^*_{GL}$.

Our discovery of this paper is that when 
we restrict ourselves to the case of 
$r=2$ and generic $s\in V_{GL_2}^*$
so that $\Sigma_s$ is smooth and the covering
is simply ramified,
the generalized
Eynard-Orantin theory precisely 
gives the quantization of a \emph{family} of
smooth spectral curves $\widetilde{\Sigma}\big|_V$
for a
contractible
 open neighborhood $V\subset V^*_{GL_2}$
of $s$.

\subsection{The Generalized Eynard-Orantin theory}

In their seminal paper \cite{EO1}, Eynard and
Orantin propose a geometric theory of
computing quantum invariants using an integral
recursion formula on a plane curve $\Sigma$
which is realized as
 a simply ramified covering $\pi:\Sigma\lrar \bC$,
 i.e., when the base curve
 $C$ of (\ref{eq:spectral})
 is the complex line $\bC$.
 In Section~\ref{sect:EO} we  generalize  the original
topological recursion to a mathematical 
 framework suitable for the purpose of the current paper.
 The heart of this theory is an integral recursion 
 formula, originally found in random matrix theory
 \cite{AMM, CEO, E2004}.

 The topological nature of the formula itself is 
 known to the mathematics community for a long
 time. It is the same degeneration on the 
 Deligne-Mumford moduli stack
 $\Mbar_{g,n}$ of $n$-pointed stable curves
 of genus $g$ as described in 
 \cite[Chapter 17, Section 5, Page 589]{ACG}.
 It appears as the Dijkgraaf-Verlinde-Verlinde
 formula \cite{DVV} for the Witten-Kontsevich 
 intersection theory \cite{K1992,W1991}, 
 known as the \emph{Virasoro constraint
 condition},
 and also as a recursion formula for the 
 Weil-Petersson volume of the moduli
 space of bordered hyperbolic surfaces
 in Mirzakhani's work \cite{Mir1,Mir2}
 (see also \cite{LX,MSaf}).
 The key difference between the topological recursion
 and the above mentioned  formalisms
 is that the former
  is a \textbf{B-model theory} that exhibits a
 universal structure
 (cf.~\cite{BKMP,M2}). 
 Indeed, the B-model formalism
 is the \emph{Laplace transform} 
 \cite{DMSS, EMS} of the 
 geometric equations mentioned above.
 
 In the context of the Hitchin spectral curves
or the Seiberg-Witten curves
 (\ref{eq:family}), the generalized 
 formalism goes 
 as follows. 
 The goal of the theory is to define 
 symmetric differentials $W_{g,n}^s$ on 
 $\Sigma_s^n$ for $g\ge 0$ and $n\ge 1$. The starting point
 is the two \emph{unstable} cases $2g-2+n\le 0$. 
 We first define $W_{0,1}^s(z_1) = \iota^* \eta(z_1)$,
 which is called the \textbf{Seiberg-Witten
 differential}. 
 As 
 $W_{0,2}^s(z_1,z_2)$ we take the Riemann
 fundamental form of the second kind
 \cite{Fay, Mumford} with an appropriate 
 normalization that
 we can choose on an open neighborhood 
 of a generic point $s\in V^*_{GL}$.
  This is the unique differential
 form of degree $2$ on $\Sigma_s\times \Sigma_s$ with 
 double poles along the diagonal, and when considered
 as an integration kernel it operates as the
 exterior differentiation $f\longmapsto df$ for
 any meromorphic function on $\Sigma_s$.
 For the \emph{stable} range
 $2g-2+n>0$, the differentials $W_{g,n}^s$
 at a point 
 $(z_1,\dots,z_n)\in \Sigma_s^n$
 are recursively
 defined by the following integral recursion
 formula:
 \begin{multline}
 \label{eq:EO intro}
 W_{g,n}^s(z_1,\dots,z_n) =
 \half\;\frac{1}{2\pi i}\sum_{p\in R_s}
 \oint_{\gam_p}
 \frac{\int_{z} ^{\sigma_p(z)}
 W_{0,2}^s(\;\cdot\; ,z_1)}
 {W_{0,1}^s\big(\sigma_p(z)\big)-W_{0,1}^s(z)}
 \\
 \times \Bigg[
 \sum_{j=2}^n \left(
 W_{0,2}^s(z,z_j)W_{g,n-1}^s\big(
 \sigma_p(z),z_{[\hat{1}, \hat{j}]}\big)
 +
 W_{0,2}^s\big(\sigma_p(z),z_j\big)
 W_{g,n-1}^s\big(
z,z_{[\hat{1}, \hat{j}]}\big)
 \right)
 \\
 +
 W_{g-1,n+1}^s\big(z,\sigma_p(z),z_{[\hat{1}]}
 \big)
 +
 \sum_{\substack{g_1+g_2=g\\
 I\sqcup J=[\hat{1}]}} ^{\text{stable}}
 W_{g_1,|I|+1}^s(z,z_I)W_{g_2,|J|+1}^s\big(
 \sigma_p(z),z_J\big)
 \Bigg].
 \end{multline}
 Here $R_s$ is the ramification divisor 
 of  the spectral curve $\pi:\Sigma_s\lrar C$
 which is assumed to be a simple 
 ramified covering,
 $\gam_p$ is a small simple closed loop
 with the positive orientation around
 a Lagrangian singularity 
 $p\in R_s\subset \Sigma_s$,
 and $\sigma_p$ is the local Galois 
 conjugation of the curve $\Sigma_s$ near $p$.
 The residue integration is taken with 
 respect to the $z$ variable on $\gam_p$.
 For the index set $[n]=\{1,\dots,n\}$, we indicate
 missing indices by the $\hat{}$ notation.
 For a subset $I\subset [n]$, 
 we denote $z_I = (z_i)_{i\in I}$, and by $|I|$
 the cardinality of $I$.
 The sum in the last line runs for all partitions
 of $g$ and set partitions of $\{2,\dots,n\}$,
 subject to the condition that 
 $2g_1-1+|I|>0$ and $2g_2-1+|J|>0$.

 The \textbf{free energy} of type $(g,n)$ is 
 a (meromorphic) function $F_{g,n}^s$ 
 on $\Sigma_s^n$
 satisfying that
 \begin{equation}
 \label{eq:Fgn intro}
 d_1\cdots d_n F_{g,n}^s = W_{g,n}^s.
 \end{equation}
 Of course such $F_{g,n}^s$'s are never unique
 because of the constants of integration, 
 and their existence is not even guaranteed because
 $\Sigma_s$  has a nontrivial fundamental group.
 When $F_{g,n}^s$ exists, we impose
 the uniqueness condition by
  integration along the fiber:
 \begin{equation}
 \label{eq:symmetry}
 (\pi_i)_* F_{g,n}^s :=
 \sum_{z_i\in \pi^{-1}(x_i)}
 F_{g,n}^s(z_1,\dots,z_i,\dots,z_n) = 0,
 \qquad (g,n) \ne (0,2).
 \end{equation}
 Here we choose an arbitrary point $x_i\in C$
 that is not a branched point, and consider the 
 integration of $F_{g,n}^s$  along the fiber
 of $\pi$ at $x_i$ 
 for the $i$-th component of the
 product of $\Sigma_s$, while fixing all other
 $z_j$'s, $j\ne i$.

\subsection{The main result}

We prove the following.

\begin{thm}[Main Theorem]
Let $C$ be an 
arbitrary  smooth projective algebraic
curve of genus $g \ge 2$ over $\bC$. We consider the 
family \eqref{eq:family} of
degree $2$ spectral curves on $C\times V_{SL_2}^*$
corresponding to the $SL(2,\bC)$ Hitchin
fibration.
If the spectral data $s\in V_{SL_2}^*
:=H^0\big(C,(\Omega_C^1)^{\tensor 2}
\big)$ is generic
so that $\Sigma_s$ is non-singular and the covering
$\pi:\Sigma_s\lrar C$ is simply ramified, then
there is an open neighborhood 
$s\in V\subset H^0\big(C,(\Omega_C^1)^{\tensor 2}
\big)$  such that the family of
spectral curves $\widetilde{\Sigma}\big|_V$
 is  quantizable by using the
 Eynard-Orantin theory.
   \end{thm}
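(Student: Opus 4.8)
The plan is to construct, for each generic $s$, a second-order $\hbar$-differential operator on $C$ whose principal symbol is the defining equation of $\Sigma_s$, and to show that the operator depends holomorphically on the spectral parameter over a contractible neighborhood $V$ of $s$. Concretely, I would work locally on $C$ in a coordinate $x$ away from the caustics, where the spectral curve is $y^2 = q(x)$ with $q$ the quadratic differential (written as a function via the coordinate), and away from the finitely many branch points the double cover $\pi$ splits into two sheets $y = \pm\sqrt{q}$. The first step is to run the generalized topological recursion \eqref{eq:EO intro} with the initial data $W_{0,1}^s = \iota^*\eta$ and $W_{0,2}^s$ the normalized fundamental bidifferential, produce all the $W_{g,n}^s$, and then integrate them to the free energies $F_{g,n}^s$, using the fiberwise vanishing normalization \eqref{eq:symmetry} to make the $F_{g,n}^s$ unique. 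The second step is to form the WKB wave function
\begin{equation*}
\psi^s(x,\hbar) = \exp\!\left(\sum_{g\ge 0}\sum_{n\ge 1}\frac{\hbar^{2g-2+n}}{n!}\,F_{g,n}^s(x,\dots,x)\right),
\end{equation*}
evaluated on one sheet of $\pi^{-1}(x)$, and to verify directly that it is annihilated by an operator of the form $\bigl(\hbar\,d/dx\bigr)^2 + \hbar\,P_1(x,\hbar)\,(d/dx) + P_0(x,\hbar)$ with $P_0, P_1$ holomorphic in $x$ (with at worst controlled poles at the caustics that the $\cO_\Sigma(R)$ twist absorbs) and formal in $\hbar$.

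The mechanism for the second step is the \emph{principal specialization} argument: substituting $z_1 = \dots = z_n = $ (a point near $x$) into the recursion \eqref{eq:EO intro} and resumming over $(g,n)$ turns the integral recursion into a differential equation for $\psi^s$. This is where the residue calculation that the introduction flags as "the main technical difficulty" enters: I would compute the residues $\oint_{\gam_p}$ at each ramification point $p\in R_s$ globally, showing that the only surviving contributions assemble into the coefficients $P_0, P_1$, and that the would-be higher-order-in-$\hbar$ error terms cancel because of the global constraint \eqref{eq:symmetry}. The Airy-function behavior near each $p\in R_s$ guarantees that the formal series $\psi^s$ has the right local structure at the caustics for the operator to be globally defined as an $\hbar$-deformed $D$-module, i.e.\ that the apparent singularities at the branch points are genuinely apparent. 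Combining these, for fixed $s$ one obtains a quantization: an $\hbar$-connection $\nabla^{s,\hbar}$ on a rank-$2$ bundle over $C[[\hbar]]$ whose $\hbar\to 0$ semiclassical limit, via the WKB ansatz, has characteristic variety exactly $\Sigma_s$ — this last compatibility being precisely the content built into $W_{0,1}^s = \iota^*\eta$.

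The final step promotes this from a single $s$ to the family over $V$. Here I would note that every ingredient — the curve $\Sigma_s$, the divisor $R_s$ and loops $\gam_p$, the normalized $W_{0,2}^s$, hence all $W_{g,n}^s$ and $F_{g,n}^s$, and therefore the coefficients $P_0, P_1$ of the operator — varies holomorphically with $s$ as long as $\Sigma_s$ stays smooth and the covering stays simply ramified, which is an open condition; and the fundamental bidifferential can be normalized consistently over a contractible $V$ because there is no monodromy obstruction to choosing a symplectic basis of cycles on the smooth family $\widetilde\Sigma\big|_V$. Thus the operators fit together into a single $\hbar$-deformed $D$-module over $C\times V$, giving the quantization of $\widetilde\Sigma\big|_V$ asserted. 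I expect the genuine obstacle to be the global residue computation in the principal-specialization step — keeping track of the contributions of all ramification points simultaneously on a compact curve of genus $\ge 2$, and proving the cancellation of the spurious $\hbar$-order terms — since on $C=\bC$ this is a local computation at finitely many points in the affine line, whereas here one must exploit the global relation \eqref{eq:symmetry} and the Riemann bilinear identity for $W_{0,2}^s$ to close the argument.
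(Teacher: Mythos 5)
Your overall route---the generalized recursion, free energies normalized by \eqref{eq:symmetry}, principal specialization into a WKB wave function, and passage to the family over a contractible $V$ with a consistent symplectic basis---is the same pipeline the paper follows. But there is a genuine gap at precisely the step the paper flags as the heart of quantizability: the unstable term $F_{0,2}$. The normalization \eqref{eq:Fgn balance} explicitly excludes $(g,n)=(0,2)$, so the recursion determines $F_{0,2}$ only up to adding $f(z_1)+f(z_2)$ for an arbitrary function $f$; yet $F_{0,2}(z,z)$ enters your wave function at order $\hbar^{0}$ (it is $2S_1$ in the expansion \eqref{eq:Sm}). Consequently the $\psi^s$ you write down is not yet well defined, and the operator annihilating it is not pinned down by your argument. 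The paper resolves this by \emph{defining} $S_1$ through the consistency condition \eqref{eq:consistency}, i.e.\ $S_1=-\frac{1}{2}\log(dS_0/dx)$ as in \eqref{eq:S1}, equivalently $e^{S_1}=y^{-1/2}$; only with this choice does the first-order term disappear and the operator become exactly $P(x,\hbar)=\hbar^2(d/dx)^2+s_2(x)$ of \eqref{eq:local qc}, with $\hbar$-independent coefficients and semi-classical limit $\eta^{\tensor 2}+\pi^*s=0$.

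Relatedly, your target statement---an operator $(\hbar\,d/dx)^2+\hbar P_1(x,\hbar)(d/dx)+P_0(x,\hbar)$ with coefficients merely \emph{formal} in $\hbar$---is too weak to constitute a quantization: any formal WKB series is annihilated by some operator with formal coefficients (even the first-order operator $\hbar\,d/dx-\hbar\,\partial_x\log\psi$), and the semi-classical limit of such an operator need not recover $\Sigma_s$. The content of the theorem is that the coefficients can be taken $\hbar$-independent and equal to the spectral data, and this is exactly what must be proved: one shows that the principal specialization of the differential recursion \eqref{eq:Fgn recursion} closes into the recursion \eqref{eq:Sm recursion} for the $S_m$, which, given \eqref{eq:consistency} and away from the caustics, is equivalent to the order-by-order expansion \eqref{eq:Sch3} of $P\Psi=0$. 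Two further corrections to your mechanism: the cancellation of the ``spurious'' terms on the right-hand side of \eqref{eq:ps1} is a purely combinatorial cancellation between the $(g,n-1)$ and $(g-1,n+1)$ contributions upon applying $\sum_{2g-2+n=m}\frac{1}{(n-1)!}$---it uses neither \eqref{eq:symmetry} nor any Riemann bilinear identity; the balance condition \eqref{eq:Wgn balance} is instead used earlier, when the residues in \eqref{eq:Wgn} are moved from $R_s$ to the diagonals to derive \eqref{eq:Fgn recursion}, which is legitimate only because $W_{g,n}$ has poles solely on $R_s$. Finally, the operator is globally defined simply because $s_2$ is a global quadratic differential; the Airy behavior at the caustics describes where the WKB expansion breaks down, and is not what removes singularities of the coefficients.
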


More precisely, we construct a 
quantum curve, or 
a \textbf{Schr\"odinger 
operator} $P_s(x,\hbar)$, as more commonly known,
  on a formal family 
$C[[\hbar]]$ of the curve $C$ such that
\begin{equation}
\label{eq:D/DP intro}
E = \cD^\hbar\big/\cD^\hbar P_s
\end{equation}
is a $D$-module 
of $\cO_{C[[\hbar]]}$-rank $2$ over ${C[[\hbar]]}$.
Here we denote by 
$\cD^\hbar=\cD_{C[[\hbar]]}^\hbar$ the sheaf of 
differential operators on $C[[\hbar]]$
without $\hbar$-derivatives. We use
local coordinates $z$ on $\Sigma_s$,
$x$ on $C$, and a local section $z=z(x)$ of $\pi$.
 We prove that the canonical solution of
 the Schr\"odinger equation
\begin{equation}
\label{eq:Sch intro}
P_s(x,\hbar) \big|_U\Psi_s\big(z(x),\hbar\big) = 0
\end{equation}
defined on an open subset $U\subset C$
that contains no caustics of $\pi:\Sigma_s\lrar C$
is  constructed by the formula of \cite{EO1,GS}
\begin{equation}
\label{eq:Psi intro}
\Psi_s(z,\hbar) 
=
\exp
\left(
\sum_{g\ge 0} \sum_{n\ge 1}\frac{1}{n!}
\hbar^{2g-2+n} F_{g,n}^s(z,\dots,z)
\right).
\end{equation}
In the context of the AGT correspondence, 
this seems to be related to the function known as 
a \textbf{conformal block}.
We note that (\ref{eq:Psi intro})
 is exactly a geometric refinement
of the singular perturbation method
known as the WKB approximation.
Moreover, the \textbf{semi-classical limit}
(i.e., the zeroth-order terms in the $\hbar$-expansion
of the 
WKB approximation)
of this Schr\"odinger equation recovers
the spectral curve equation 
$$
\eta^{\tensor 2} 
+\pi^*s = 0
$$
for   $\Sigma_s\subset T^*C$.

The heart of the construction 
is Theorem~\ref{thm:Fgn recursion},
which is derived from the generalized integral
recursion 
(\ref{eq:EO intro}) by concretely evaluating the
residue integration of the formula. 
We emphasize that the residue calculation 
of (\ref{eq:EO intro}) is made
possible only because we generalize
the topological recursion formalism of \cite{EO1} to 
the compact base curve $C$. 
We establish the unique
existence of the free energy $F_{g,n}^s$ for every
$(g,n)\ne (0,2)$, and 
construct the Schr\"odinger operator
$P_s$ from (\ref{eq:Fgn recursion}), after
identifying $F_{0,2}^s$ through the first-order
WKB approximation.
Although in its expression, (\ref{eq:Psi intro}),
depends on the choice of coordinates, 
Theorem~\ref{thm:Fgn recursion} is coordinate
independent, and establishes the quantization 
of the spectral curve in a coordinate-free manner.

We also remark that though our formalism is 
 more general,  the actual technical
calculations are  parallel to that of
\cite{EB}. Indeed, we asked the following
question: what would be
the mathematical framework that would allow
the analysis 
technique of \cite{EB, DMSS,MS}  work?
In the process of answering this question, we
discover that the Hitchin spectral curves are
the right framework.

The $SL(2,\bC)$ assumption we impose is due to 
a technical reason, but not by any conceptual
reason. The formulation of 
\cite{BouE}, which  assumes that the spectral 
curve is a compact plane algebraic curve,
can easily be generalized
to our situation of Hitchin spectral curves
\eqref{eq:family}.
However, the idea developed in  \cite{BouE} 
does not seem to directly provide
the counterpart of  our
Theorem~\ref{thm:Fgn recursion}.
We can also allow a base curve $C$
with prescribed marked points, and
consider the moduli space of \emph{parabolic}
 Higgs bundles.
In the context of the AGT correspondence
and Seiberg-Witten curves
\cite{AGT,Gai}, such a setup naturally arises. 
In this 
paper, however, we stay with the simplest situation,
avoiding too much technical complications.
The case for parabolic Higgs bundles 
with singular Seiberg-Witten
differentials will be
treated in a forthcoming paper.

\subsection{The geometric significance
of the topological recursion}

The significance of what  the topological recursion
 does is first recognized  
in the string theory community
 \cite{BKMP,DV2007,M2,OSY}.
Mari\~no \cite{M2}, and then 
Bouchard, Klemm, Mari\~no, and Pasquetti 
\cite{BKMP}, have
conjectured that when the spectral curve 
$\pi:\Sigma\lrar \bC^*$ is 
the mirror curve of a toric Calabi-Yau space $X$
of dimension $3$ (in this case it covers the punctured 
complex line $\bC^*$),
the topological recursion should calculate open
Gromov-Witten invariants of $X$ for all genera
(\emph{the remodeling conjecture}).
Their conjecture is a concrete and
universal mechanism to read off, 
from  $W_{g,n}$ of
(\ref{eq:EO intro}), all   open Gromov-Witten
invariants of 
genus
$g$ with $n$ boundary 
components of the source Riemann surface
that are mapped to a Lagrangian in $X$.

Bouchard and Mari\~no then related the 
topological recursion with the counting problem of 
simple Hurwitz numbers \cite{BM}.
They conjectured that certain generating functions
of simple Hurwitz numbers should satisfy  
 (\ref{eq:EO intro}) for 
$C = \bC^*$ with the spectral curve $\Sigma$
 defined by the \emph{Lambert function}
$x = y e^{-y}$.

The Hurwitz number conjecture of Bouchard and 
Mari\~no was solved in \cite{EMS,MZ}. The key
discovery was that the topological recursion was 
equivalent to the \emph{Laplace transform}
of the combinatorial relation known as 
the \emph{cut-and-join equation} 
\cite{Goulden,GJ,V}
of Hurwitz numbers.
Here again, we emphasize that the proof of the
conjecture is based on the global complex 
analysis of the Lambert curve, rather than
the local behavior of the spectral curve.

Once the relation between a counting problem
(A-model) and the integral recursion on a 
complex curve (B-model) is understood as
the Laplace transform, the same idea is used to 
solve the remodeling conjecture
of \cite{BKMP} for the case of topological 
vertex \cite{Chen,Zhou3}. 
Since the topological vertex method gives a
combinatorial description of the Gromov-Witten 
invariants for an arbitrary
smooth toric Calabi-Yau threefold
\cite{LLLZ}, the smooth case of the 
remodeling conjecture
was solved in \cite{EO3} by identifying the 
combinatorial structure of the integral recursion
with the localization method in 
open Gromov-Witten
invariants. 
Most recently, the general orbifold
case of the  conjecture
is  solved in \cite{FLZ}.

The mathematical structure
 of topological recursion has also 
been  studied in 
 \cite{DBOSS, E2011}, when the 
 spectral curve is considered as a collection of
 disjoint open discs. In particular, the
 discovery of the equivalence to 
 the Givental formalism in this local case 
 \cite{DBOSS}, and its application to 
 obtaining a new proof of the ELSV formula
 \cite{DBKOSS},
 are significant. 
 Compared to these structural analysis,
 the emphasis of our current work lies in 
 noticing  the importance
 of  the
 \emph{global structure} of the spectral curve
 that covers an \emph{arbitrary} 
 projective algebraic curve.

 \subsection{Quantum curves, and the motivation of 
 our current paper}

Although the topological recursion  for 
simple Hurwitz numbers was conjectured
from the consideration of open 
Gromov-Witten invariants of $\bC^3$
at the infinity limit of the framing parameter,
the Hurwitz case has a feature not shared with the
geometry of toric Calabi-Yau spaces. This is 
\emph{the existence of the quantum curve}
\cite{MS}.  The similar situation  happens also
for orbifold Hurwitz numbers \cite{BHLM, MSS}.

Gukov and Su\l kowski \cite{GS} considered
the \textbf{A-polynomial} of Cooper, Culler, Gillet, 
Long, and Shalen \cite{CCGLS}
associated with a knot $K$. 
The $SL(2,\bC)$-character variety of the 
fundamental group of the knot 
complement is mapped to the boundary 
torus
\begin{multline*}
\Hom\big(\pi_1(S^3\setminus K),
SL(2,\bC)\big)\big/\!\!\big/SL(2,\bC)
\lrar
\Hom\big(\pi_1(T^2),
SL(2,\bC)\big)\big/\!\!\big/SL(2,\bC)
\\
\isom
(\bC^*)^2
\end{multline*}
and  determines a
(usually) 
singular plane algebraic curve in $(\bC^*)^2$
defined over $\bZ$. Its defining equation
is the A-polynomial, 
 which captures
the \emph{classical} knot invariant
$\pi_1(S^3\setminus K)$.
 The proposal of 
Gukov-Su\l kowski is that by applying the
topological recursion that is suitably
modified for spectral curves in
$(\bC^*)^2$, one can quantize the
A-polynomial into a Schr\"odinger equation,
much like (\ref{eq:Sch intro}) above but 
of an infinite order due to the appearance 
of $\bC^*$ in the fiber direction 
of $\pi$, whose
semi-classical limit recovers precisely the
A-polynomial. Moreover,
they predict that the Schr\"odinger equation
is equivalent to the \textbf{AJ-conjecture} of 
Garoufalidis \cite{Gar,GarLe}, which 
implies that the generator $\Psi$ of the
$\hbar$-deformed
$D$-module is the colored Jones polynomial
of the knot $K$!

We recall that the A-polynomial of a knot
$K$ is a polynomial
in $\bZ[x,y]$, where $x$ and $y$ are determined
by the meridian and the 
longitude of the torus boundary
of the knot complement in $S^3$. It is established
in \cite{CCGLS} that the Steinberg symbol 
$\{x,y\}\in K_2\big(\bC(C_K)\big)$ 
is a torsion element of the
second algebraic K-group of the function field
of the projective curve $C_K$ determined by the 
A-polynomial of the knot $K$. Gukov and
Su\l kowski \cite{GS}
attribute the quantizability of the 
A-polynomial to this algebraic K-theory condition,
which plays a similar role
of the Bohr-Sommerfeld quantization 
condition through the Bloch regulator.

We have constructed rigorous 
mathematical examples of the topological recursion
 in \cite{DMSS}, for which we can test
all physics predictions. A quantum curve construction
is also carried out in \cite{MS}, and for many
other examples of counting problems of Hurwitz type
\cite{BHLM,MSS,Zhou4}. For these cases the 
$K_2$ condition (the torsion property of the
Steinberg symbol) holds. But it has to be remarked
that all these rigorous examples have 
spectral curves of genus $0$. So far no
examples of quantum curves have been 
rigorously constructed for a spectral 
curve with a higher genus. 
This motivates our current paper. 
Although we do not address the question in this
paper, the ultimate interest is to identify the
quantum topological
 information that our $\Psi$ must carry.
In this context, establishing the relation to the 
Seiberg-Witten prepotential of 
Nekrasov \cite{N} through the AGT correspondence
\cite{AGT} is the key \cite{BMT,Gai}. 
The Eynard-Orantin theory then provides an 
expansion formula for the conformal block $\Psi$
from the geometric data of the
Seiberg-Witten curve covering
the Gaiotto curve.

We note that the relation between the topological
recursion
 and knot invariants are growing at this moment
\cite{AV, BE2, BEM,DFM, FGS1, FGS2}. 
It is beyond our scope to make any comment in 
this direction.

\subsection{Organization of the paper}
The paper is organized as follows. 
We begin with gathering the classical 
geometric materials we use in this paper, recalling
spectral curves, Riemann prime forms, and geometry
of degree 2 spectral curves, in
 Section~\ref{sect:spectral}.
Then in Section~\ref{sect:EO}, we re-define
the topological recursion with an arbitrary 
base curve. 
Section~\ref{sect:integral} is devoted to 
integrating the newly formulated
 recursion. We will establish
a differential recursion formula for free energies.
Here our generalization \eqref{eq:EO intro} of
 the topological 
recursion  of \cite{EO1} plays an essential role, 
due to the fact that our  spectral curve and the 
base curve are both compact.
The notion of quantum curves from 
physics requires us to utilize Deligne's
$\lam$-connections. We review the necessary
materials in Section~\ref{sect:lam}, following
\cite{A}.
Finally in Section~\ref{sect:WKB},
we take the \emph{principal specialization}
of the formula established in Section~\ref{sect:integral}. In this way we construct the quantum
curve and the $\hbar$-deformed
$D$-module, quantizing the
spectral curve. This method is indeed 
the same as solving the exact WKB analysis.

\section{Geometry of spectral curves}
\label{sect:spectral}

Let $C$ be a non-singular complete algebraic 
curve over $\bC$ of genus $g=g(C)\ge 2$. 
Although somewhat restrictive, since we need
the smoothness and the simple ramification
conditions, we adopt the 
following definition in this paper. 

\begin{Def}
A \textbf{spectral curve} of degree $r$ is a
 complete smooth algebraic curve 
$\Sigma$ embedded in 
the cotangent bundle $T^*C$ such that its projection
\begin{equation*}
\begin{CD}
\iota :\Sigma @>>> T^*C
\\
&&@VV{\pi}V
\\
&&C
\end{CD}
\end{equation*}
onto $C$ is a simply ramified covering of 
degree $r$. We denote by  
$\eta\in H^0\big(T^*C,\pi^*\Omega_C^1\big)$
the tautological $1$-form on $T^*C$ such that
$-d\eta$ is the canonical holomorphic
symplectic form on $T^*C$. A \textbf{spectral data}
is an element of a vector space
\begin{equation}
\label{eq:spectral data}
s=
(s_1,s_2,\dots,s_r)\in V_{GL}^* := \bigoplus_{i=1}^r 
H^0\big(C,(\Omega_C^1)^{\tensor i}\big)
\end{equation}
of dimension $r^2(g-1)+1$. We consider a
spectral data \textbf{generic} if the 
characteristic equation
\begin{equation}
\label{eq:char eq}
\eta^{\tensor r}+
\sum_{i=1}^r s_i \eta^{\tensor (r-i)} = 0
\end{equation}
defines a spectral curve $\Sigma$
in our sense. Here  the characteristic polynomial 
is viewed as a global section
$$
\eta^{\tensor r}+ 
\sum_{i=1}^r \pi^*s_i \tensor \eta^{\tensor (r-i)}\in 
H^0\big(T^*C,\pi^*(\Omega_C^1)^{\tensor r}\big)
$$
that defines $\Sigma$ as its $0$-locus.
To indicate the $s\in V^*_{GL}$ dependence of
the spectral curve, we use the notation 
$\Sigma = \Sigma_s$.
\end{Def}

\begin{rem}
The smoothness assumption of $\Sigma_s$ is 
crucial. The evaluation of the
residue integrations of (\ref{eq:EO intro}) 
that is necessary for defining the free energies
would not go through if $\Sigma_s$ has singularities.
The assumption of simple ramification is 
imposed here only because of the simplicity of the 
formulation. We can generalize the framework
to arbitrarily  ramified coverings in a similar way
as developed in \cite{BouE}, although it is 
restricted to the case when 
the spectral curve is a compact plane curve.
\end{rem}

\begin{rem}
Note that for 
every $1$-form $s_1\in H^0(C,\Omega_C^1)$,
$\eta+\pi^*s_1$ determines the same
symplectic form, because
\begin{equation}
\label{eq:eta+s1}
-d\eta = -d(\eta + \pi^* s_1).
\end{equation}
\end{rem}

The spectral curves are  
originally considered in the 
context of Abelianization of the moduli space of
stable vector bundles on $C$ in terms of 
Hitchin integrable systems \cite{BNR, DM, 
H1,H2}. Recall that a Higgs pair $(E,\phi)$ 
of rank $r$ and degree $d$ consists
of a vector bundle $E$ on $C$ of 
rank $r$ and degree $d$ and a Higgs field
$\phi\in H^0\big(C,\cE nd (E)\tensor \Omega_C^1
\big)$. Stability conditions are appropriately 
defined so that for the case of $(r,d)=1$ the
moduli space $\cH_C(r,d)$ 
of stable Higgs pairs form  a smooth
quasi-projective variety of dimension
$2(r^2(g-1)+1)$. 
The space  $\cH_C(r,d)$ contains the cotangent
bundle $T^*\cU_C(r,d)$ of the moduli space
$\cU_C(r,d)$ of stable vector bundles of 
rank $r$ and degree $d$ on $C$ as an open dense
subset.
We note that the character variety
$$
\Hom\big(\pi_1(C),GL(r,\bC)\big)\big/\!\!
\big/GL(r,\bC)
$$
has the same dimension $2(r^2(g-1)+1)$.
We refer to \cite{H,HT, HM} for more detail 
on the relation between the character variety
and the Hitchin moduli spaces.

The \textbf{Hitchin fibration}
\begin{equation}
\label{eq:Hitchin map}
\mu_H:\cH(r,d)\owns (E,\phi) \longmapsto
\det(y-\phi) = y^r + \sum_{i=1}^r (-1)^i
\trace(\wedge^i \phi) y^{r-1}
\in V_{GL}^*
\end{equation}
induces an algebraically completely integrable
 Hamiltonian system on $\cH_C(r,d)$.
 A generic Higgs pair $(E,\phi)$ gives rise to a
 generic spectral data
$$
s=(s_1,s_2,\dots,s_r) =  \big((-1)^i
\trace(\wedge^i \phi) \big)_{i=1} ^r \in V_{GL}^*,
$$
 and the fiber of the Hitchin fibration $\mu_H$
 is isomorphic to the Jacobian variety of the
 spectral curve:
 $$
 \mu_H^{-1}(s) \isom \Jac(\Sigma_s).
 $$
 In particular, the spectral curve has genus
 \begin{equation}
 \label{eq:g(Sigma)}
 \hat{g} = g(\Sigma_s) = r^2(g-1)+1.
 \end{equation}
 If we further assume that the projection
 $\pi:\Sigma_s\lrar C$ is simply ramified, 
 then the ramification divisor $R_s\subset \Sigma_s$
 consists of $2r(r-1)(g-1)$ points. This shows 
 that the spectral curves we are dealing with
 form a very 
 special class of ramified coverings over $C$
 of a given degree $r$. If we were to 
 consider the Givental formalism following 
 \cite{DBOSS} or the corresponding Frobenius
 manifold \cite{D, DZ}, then for a fixed $C$, 
 the 
 cardinality of $R_s$ should
 represent  the degrees of freedom
 of the theory. However, we note that 
 $R_s$ is far from arbitrary as
 a divisor. Indeed the degrees
 of freedom of our case is less than the expected
 value from the Frobenius manifold theory, since
 $$
 \dim V^*_{GL} - \dim\Jac(C) 
 = (r^2-1)(g-1) < (2r^2-2r)(g-1) = \deg R_s
 $$
 for $r\ge 2$. Here we subtract the dimension 
 of $\Jac(C)$ because changing the vector bundle
 $E$ to $E\tensor \cL$ with $\cL\in \Jac(C)$ does 
 not change the spectral curve, because 
 the Higgs field $\phi$ remains the same. 
 As noted in \cite{HM}, the family 
 of spectral curves is effective only on the space
 \begin{equation}
 \label{eq:VSL}
 V^*_{SL} := \bigoplus_{i=2}^r H^0
 \big(C,(\Omega_C^1)^i\big),
 \end{equation}
 which has the dimension $(r^2-1)(g-1)$.

 This consideration also corresponds to the 
 following.
 The application of a 
 \emph{symplectic transformation}
 $\eta\longmapsto \eta+\frac{1}{r} \pi^*s_1$ 
  changes
 the characteristic equation
 \begin{equation}
 \label{eq:eta shift}
 \eta^{\tensor r} +\sum_{i=1}^r
 \pi^* s_i \tensor \eta^{\tensor(r-i)}
= \left(\eta + \frac{1}{r} \pi^*s_1\right)^{\tensor r} +
\sum_{i=2}^r
\pi^*s_i'\tensor  
 \left(\eta + \frac{1}{r} \pi^*s_1\right)^{\tensor(r-i)},
 \end{equation}
 where $s_i'\in 
 H^0\big(C,(\Omega_C^1)^{\tensor i}\big)$ is a 
 polynomial in $s_1,\dots,s_i$ of 
 the homogeneous 
 degree $i$. Thus without loss of generality 
 we can consider the \textbf{traceless}
 spectral data
 $
s= (s_2,\dots,s_r)\in V_{SL}^*
 $
 for the purpose of dealing with the spectral 
 curve.

 To introduce the Eynard-Orantin theory, we need
 a classical geometric ingredient,
  the \textbf{normalized
fundamental differential
of the second kind} $B_X(z_1,z_2)$ on a 
smooth complete algebraic curve $X$ 
\cite[Page 20]{Fay}, \cite[Page 3.213]{Mumford}. 
This is a symmetric
differential $2$-form on $X\times X$
with second-order poles only along the diagonal. 
We identify the Jacobian
variety of $X$ as 
$\Jac(X) = Pic^0(X)$, which is isomorphic to
$Pic^{g-1}(X)$. The \emph{theta divisor}
 $\Theta$ of 
$Pic^{g-1}(X)$ is defined by 
$$
\Theta = \{L\in Pic^{g-1}(X)\;|\; \dim H^1(X,L)>0\}.
$$
We use the same notation for the translate divisor
on $\Jac(X)$, also called the theta divisor. 
Consider the diagram
\begin{equation*}
		\xymatrix{& \Jac(X)
		\\
		&X\times X\ar[dl]_{pr_1} \ar[u]_{\delta}
		\ar[dr]^{pr_2}
		\\
		X& & 	X ,
		}
\end{equation*} 
where $pr_j$ denotes the projection to the
$j$-th component,
and 
$$
\delta:X\times X \owns (p,q)\longmapsto p-q\in
\Jac(X).
$$
The \emph{prime form} $E_X(z_1,z_2)$ 
\cite[Page 16]{Fay} is
defined as a holomorphic section 
$$
E_X(p,q) \in H^0\left(X\times X,
pr_1^* (\Omega_X^1)^{-\half}\tensor 
pr_2^* (\Omega_X^1)^{-\half}\tensor
\delta^*(\Theta)
\right),
$$
where we choose  Riemann's spin structure
(or the Szeg\"o kernel) $(\Omega_X^1)^\half$, 
which has a unique global section
up to the constant multiplication
(see \cite[Theorem~1.1]{Fay}).
We have
\begin{enumerate}
\item $E_X(p,q)$ vanishes only along the
diagonal $\Delta\subset X\times X$, and has
simple zeros along $\Delta$.
\item Let $z$ be a local coordinate on 
$X$. Then $dz(p)$ gives the local trivialization of
$\Omega_X^1$ around $p$. When $q$ is near at $p$,
 $\delta^*(\Theta)$ is also trivialized 
around $(p,q)\in X\times X$, and we have a local
expression
\begin{equation}
\label{eq:E}
E_X\big(z(p),z(q)\big) = 
\frac{z(p)-z(q)}{\sqrt{dz(p)}\cdot
\sqrt{dz(q)}}\left(1+O\big((z(p)-z(q))^2\big)
\right).
\end{equation}
\item $E_X\big(z(p),z(q)\big) 
= -E_X\big(z(q),z(p)\big) $.
\end{enumerate}
The fundamental $2$-form $B_X(p,q)$
is then defined by
\begin{equation}
\label{eq:B}
B_X(p,q) = d_1\tensor d_2 \log E_X(p,q)
\end{equation}
(see \cite[Page 20]{Fay}, 
\cite[Page 3.213]{Mumford}).
We note that $dz(p)$ appears in (\ref{eq:E}) just
as the indicator of our choice of the
local trivialization. 
With this local trivialization, we have
\begin{multline}
\label{eq:B local}
B_X\big(z(p),z(q)\big) =
d_1\tensor d_2 \log E\big(z(p),z(q)\big)
\\
=
\frac{dz(p)\cdot dz(q)}{\big(z(p)-z(q)\big)^2}
+O(1)\;dz(p)\cdot dz(q)
\\
\in H^0\left(X\times X,pr_1^*\Omega_X^1\tensor
pr_2^* \Omega_X^1\tensor \cO(2\Delta)
\right).
\end{multline}

As noted in the literature \cite{Fay, Mumford}, 
the local expression (\ref{eq:B local}) alone
does not uniquely determine 
the form. Riemann
chose a symplectic basis 
$\la A_1,\dots,A_g;B_1,\dots,B_g\ra$ 
for $H_1(X,\bZ)$, and normalized the
fundamental form by
\begin{equation}
\label{eq:B-normal}
\oint_{A_j} B_X(\;\cdot\;,q) = 0
\end{equation}
for every $A$-cycle $A_j$, $j=1, \dots,g$. 
 Because of the symmetry $B_X(p,q)=B_X(q,p)$,
 the $A$-cycle normalization uniquely determines
 the fundamental form.

 In the theory of complex analysis in one variable,
 the most fundamental object is the 
 Cauchy integration kernel. Ironically,
 we do not have a Cauchy kernel on a compact
 Riemann surface $X$. The best we can do is the
 meromorphic $1$-form $\omega^{a-b}(z)$ 
 uniquely defined
 by the following conditions. Let $a$ and $b$ be
 two distinct points of $X$. 
 \begin{enumerate}
 \item $\omega^{a-b}(z)$ is holomorphic
 except for $z=a$ and $z=b$.
 \item $\omega^{a-b}(z)$ has a simple pole of residue 
 $1$ at $z=a$.
 \item $\omega^{a-b}(z)$ has a simple pole of
 residue $-1$ at $z=b$. 
 \item $\omega^{a-b}(z)$ is $A$-cycle normalized:
 $$
 \oint_{A_j} \omega^{a-b}(z) = 0
 $$
 for every $j=1,\dots,g$.
 \end{enumerate}
 The relation between $\omega^{a-b}(z)$ and
 Riemann's normalized second fundamental 
 form is
 \begin{equation}
 \label{eq:B and omega}
 d_1 \omega^{z_1-b} (z_2) = B_X(z_1,z_2).
 \end{equation}
 This equation does not depend on 
 the point $b\in X$.

 Now let us go back to our spectral curve
 \begin{equation}
 \label{eq:Cs}
\begin{CD}
\iota :\Sigma_s @>>> T^*C
\\
&&@VV{\pi}V
\\
&&C
\end{CD}.
\end{equation}
 In what follows, we concentrate our attention to the
 case of $r=2$ traceless spectral data. Thus our
 spectral curve $\Sigma=\Sigma_s$ is a double sheeted ramified
 covering of $C$ defined by a characteristic equation
 \begin{equation}
 \label{eq:r=2}
 \eta^{\tensor 2} + \pi^*s_2 = 0,
 \end{equation}
 where the spectral data $s$ consists of only one
 component
 $s = s_2\in H^0\big(
 C,(\Omega_C^1)^{\tensor 2}\big)$,
 which is a generic
 quadratic differential on $C$ so that the characteristic
 equation defines a smooth curve that is simply 
 ramified over $C$. 
The genus of the spectral curve,
 calculated by (\ref{eq:g(Sigma)}), gives
$ \hat{g} = g(\Sigma_s) = 4g-3$.
 The cotangent bundle $T^*C$ has a natural
 involution
 \begin{equation}
 \label{eq:sigma}
 \sigma:T^*C\supset T_x^*C\owns (x,y)\longmapsto
 (x,-y)\in T_x^*C\subset T^*C.
 \end{equation}
 The spectral curve $\Sigma_s$ is invariant under 
 $\sigma$, and it provides the deck-transformation
 of the ramified covering $\pi:\Sigma_s\lrar C$.

 Let $R_s\subset \Sigma_s$ denote the ramification divisor
 of this covering. Because of the simple
 covering assumption, $R_s$ as a point set has
 $4g-4$ distinct points that are determined by
 $s_2=0$ on $C$. Since both $C$ and $\Sigma_s$ 
 are divisors of $T^*C$, $R_s$ is defined
 also as $C\cap \Sigma_s$. Note that $\eta$ vanishes
 only along $C\subset T^*C$. As a holomorphic
 $1$-form on $\Sigma_s$, $\iota^*\eta$ has 
 $2\hat{g}-2 = 8g-8$ zeros on $\Sigma_s$. Thus it
 has a degree $2$ zero at each point of $R_s$.

 As mentioned above, the Eynard-Orantin
 theory requires a \emph{normalized}
 second fundamental form of Riemann. 
 To normalize differential forms, there are
 many different choices. Here we use the 
 $A$-cycle normalization, following 
 Riemann's original idea. The reason for this 
 choice is its extendability to a
\emph{family of smooth}
 spectral curves 
 $$\widetilde{\Sigma}\big|_V
 =\{\Sigma_s\}_{s\in V}
 $$
 on a contractible open subset 
 $V\subset H^0\big(
 C,(\Omega_C^1)^{\tensor 2}\big)$.

  To explain our choice of the symplectic basis
  of the first homology group of the family
  of spectral curves, let us start with 
  choosing, once and for all, a symplectic
  basis 
  $$
  \la A_1,\dots,A_g;B_1,\dots,B_g\ra  =
  H_1(C,\bZ).
  $$
 Let us label points of $R_s$ and denote
 $R_s=\{p_1,p_2,\dots,p_{4g-4}\}$. We can 
 connect $p_{2i}$ and $p_{2i+1}$, $i=1, \dots,
 2g-3$, with a simple
 path on $\Sigma_s$ 
 that is mutually non-intersecting so that
 $\pi^*(\overline{p_{2i}p_{2i+1}})$,
 $i=1, \dots,
 2g-3$, 
 form a part of the basis for $H_1(\Sigma_s,\bZ)$. 
 We denote these cycles by $\a_1,\dots,\a_{2g-3}$.
 Since $\pi$ is locally homeomorphic away from 
 $R_s$, we have $g$ cycles $a_1,\dots,a_g$ on
 $\Sigma_s$ so that $\pi_*(a_j) = A_j$ for
 $j=1,\dots,g$, where $A_j$'s are previously
 chosen $A$-cycles of $C$. 
 We define the $A$-cycles of $\Sigma_s$ to be the
 set 
 \begin{equation}
 \label{eq:A on Cs}
 \{a_1,\dots,a_g,\sigma_*(a_1),\dots,\sigma_*(a_g),
 \a_1,\dots,\a_{2g-3}\}\subset H_1(\Sigma_s,\bZ).
\end{equation}
Clearly, this set can be extended into a symplectic
basis for $H_1(\Sigma_s,\bZ)$. 
This choice of the symplectic basis 
trivializes the homology bundle
$$
\big\{H_1(\Sigma_s,\bZ)\big\}_{s\in V}
\lrar V\subset H^0\big(
 C,(\Omega_C^1)^{\tensor 2}\big)
 $$
 globally on a contractible $V$. 
 
 The monodromy of the choice of the symplectic
 basis on the family of \emph{all}
 smooth spectral curves leads us to considering
 the modular group action on the space of 
 solutions to the Eynard-Orantin theory 
 (\ref{eq:EO intro}). In this paper we stay
 with the family on a contractible base.

 \section{The Eynard-Orantin integral recursion
on an arbitrary base curve}
\label{sect:EO}

The construction of the 
$\hbar$-deformed $D$-module over an
arbitrary complete smooth curve $C$ is carried
out in three stages. 
\begin{enumerate}
\item Construction of the Eynard-Orantin differentials
$W_{g,n}^s$ on  $\Sigma_s ^n$ for
all $g\ge 0$ and $n \ge 1$ using the geometry of
the spectral curve $\Sigma_s$.
\item Construction of the free energies
$F_{g,n}^s$, which are meromorphic functions
on $\Sigma_s ^n$ for $2g-2+n> 0$, and satisfies that
$d_1\cdots d_n F_{g,n}^s = W_{g,n}^s$.
\item Construction of the exponential 
generating function of $F_{g,n}^s$
with $\hbar$ as the expansion 
parameter, in the way the WKB approximation
dictates us to do, and take its
principal specialization. 
The principal specialization then gives the
generator of the $\hbar$-deformed $D$-module. 
\end{enumerate}

Our point of departure is the spectral curve
(\ref{eq:family}) defined by the characteristic
equation (\ref{eq:char equation}) for generic values of a
spectral data $s=V^*_{GL}$
so that $\Sigma_s$ is smooth and the covering
$\pi$ is simply ramified along the divisor 
$R_s$. Since we do not consider the monodromy
transformation and the modular property
of the theory under the change of symplectic 
basis for $H_1(C,\bZ)$ in the current paper,
for simplicity we assume that $s$ belongs to 
a contractible open subset  
$V\subset V^*_{SL} = 
\bigoplus_{i=2}^r H^0
\big(C,(\Omega_C^1)^{\tensor i}\big)$
(\ref{eq:VSL}).
What we call the Eynard-Orantin theory in 
this paper is the following procedure of
determining the Eynard-Orantin differentials.

\begin{Def}[Eynard-Orantin differentials]
For every $(g,n)$, $g\ge 0$ and $n\ge 1$, 
the quantity $W_{g,n}$ defined by one of the 
following formulas is what we call the
\textbf{Eynad-Orantin differential} of
type $(g,n)$. 
To avoid extra cumbersome notation, we
suppress the $s$-dependence of 
the Eynard-Orantin differentials.
First, we define a holomorphic $1$-form on the
spectral curve $\Sigma_s$ by 
\begin{equation}
\label{eq:W01}
W_{0,1}(z_1) = \iota^* \eta \in H^0(\Sigma_s,\pi^*
\Omega_C^1)
\subset H^0(\Sigma_s,
\Omega_{\Sigma_s}^1).
\end{equation}
We define a symmetric $2$-form $W_{0,2}$
on $\Sigma_s\times \Sigma_s$ using 
Riemann's normalized second fundamental form by
\begin{equation}
\label{eq:W02}
W_{0,2}(z_1,z_2) = B_{\Sigma_s}(z_1,z_2),
\end{equation}
where $(z_1,z_2)\in \Sigma_s\times \Sigma_s$.
For this definition we choose
once and for all a symplectic basis for 
$H_1(\Sigma_s,\bZ)$ that is independent
of $s\in V$ and  use 
the $A$-cycle normalized  second fundamental 
forms of Section~\ref{sect:spectral}.

For each $p\in R_s$ we 
choose a local neighborhood $p\in U_p\subset
\Sigma_s$. Since the covering is simple, 
there is a local Galois conjugation 
\begin{equation}
\label{eq:Galois}
\sigma_p:U_p\lrar U_p,
\end{equation}
which is an involution.
We define the 
\textbf{recursion kernel} for each 
$p\in R_s$ by
\begin{multline}
\label{eq:kernel}
K_p(z,z_1) = \frac{\int_{z}^{\sigma_p(z)}
B_{\Sigma_s}(\;\cdot\;,z_1)}
{\sigma_p^*W_{0,1}(z)-W_{0,1}(z)}
\\
\in H^0\left(
U_p\times \Sigma_s, \big((\Omega_{\Sigma_s}^1)^{-1}(2R_s)
\boxtimes \Omega_{\Sigma_s}^1\big)
\tensor \cO_{U_p\times \Sigma_s}(\Delta_s + 
(\sigma_p\times id)^*
\Delta_s)
\right),
\end{multline}
where  $\Delta_s\subset \Sigma_s\times \Sigma_s$ is the diagonal.
The reciprocal notation means
$$
\frac{1}{W_{0,1}(z)} \in H^0\big(
\Sigma_s,(\Omega_{\Sigma_s}^1)^{-1}
\tensor \cO_{\Sigma_s}(2R_s)\big).
$$
Using the recursion kernel, we define the 
first two Eynard-Orantin differentials in
the stable range $2g-2+n>0$.
\begin{equation}
\label{eq:W11}
W_{1,1}(z_1) = \half\;
\frac{1}{2\pi i}\sum_{p\in R_s}
\oint_{\gam_p} K_p(z,z_1) B_{\Sigma_s}(z,\sigma_p(z)),
\end{equation}
\begin{multline}
\label{eq:W03}
W_{0,3}(z_1, z_2,z_3) = \half\;
\frac{1}{2\pi i}\sum_{p\in R_s}
\oint_{\gam_p} K_p(z,z_1) 
\\
\times
\big(
B_{\Sigma_s}(z,z_2)B_{\Sigma_s}(\sigma_p(z),z_3)
+
B_{\Sigma_s}(z,z_3)B_{\Sigma_s}(\sigma_p(z),z_2)
\big).
\end{multline}
Here and in what follows, $\gam_q$ denotes a 
positively oriented simple
closed loop around a point $q\in \Sigma_s$,
and the integration is taken with respect to the
variable $z$ along the loop
$\gam_p$ for each $p\in R_s$. For a general value
of $(g,n)$ subject to $2g-2+n\ge 2$, the 
Eynard-Orantin differential is recursively 
defined by
\begin{multline}
\label{eq:Wgn}
W_{g,n}(z_1,\dots,z_n)
=
\half\;
\frac{1}{2\pi i}\sum_{p\in R_s}
\oint_{\gam_p} K_p(z,z_1)
\\
\times
\Bigg[
\sum_{j=2}^n \bigg(
W_{0,2}(z,z_j) 
W_{g,n-1}(\sigma_p(z),z_{[\hat{1},\hat{j}]})
+
W_{0,2}(\sigma_p(z),z_j) 
W_{g,n-1}(z,z_{[\hat{1},\hat{j}]})
\bigg)
\\
+
W_{g-1,n+1}\big(z,\sigma_p(z),z_{[\hat{1}]}\big)
+
\sum_{\substack{g_1+g_2=g
\\
I\sqcup J = \{2,\dots,n\}}}
^{\text{stable}}
W_{g_1,|I|+1}(z,z_I)W_{g_2,|J|+1}(\sigma_p(z),z_J)
\Bigg].
\end{multline}
Here we use the index convention that 
$[n] = \{1,\dots,n\}$, the hat notation $[\hat{j}]$ 
indicates deletion of the index, and for every subset
$I\subset [n]$, $z_I = (z_i)_{i\in I}$, and 
$|I|$ is the cardinality of the subset.
The sum in the third line is for indices in the 
stable range only.
\end{Def}

\begin{rem}
$W_{0,1}$ is also known as the Seiberg-Witten 
differential, when we allow prescribed poles
of $s$ on $C$. In this paper we consider only
holomorphic  $s$. Spectral data
with poles will be dealt with in a forthcoming
paper.
\end{rem}

In this definition, we need to clarify 
the ambiguity of the
integration in (\ref{eq:kernel}). 
Since $\Sigma_s$ has genus $r^2(g-1)+1$, the integration
from $z$ to $\sigma_p(z)$ of any $1$-form is
ambiguous. We use a systematic method to avoid
this ambiguity. 
Let us recall the unique $A$-cycle normalized 
meromorphic $1$-form
$\omega_s^{z-b}(z_1)$ on $\Sigma_s$. 
Regardless the point $b\in \Sigma_s$, we have
$d_z \omega_s^{z-b}(z_1) = B_{\Sigma_s}(z,z_1)$.
Therefore, we \emph{define}
the integral to be
\begin{equation}
\label{eq:B integral}
\int_{z}^{\sigma_p(z)} B_{\Sigma_s}(\;\cdot\;,z_1)
= \omega_s^{\sigma_p(z)-b}(z_1) -
\omega_s^{z-b}(z_1) 
= \omega_s^{\sigma_p(z)-z}(z_1).
\end{equation}
The recursion kernel
is now calculated to be
\begin{equation}
\label{eq:kernel1}
K_p(z,z_1) = \frac{\omega_s^{\sigma_p(z)-z}(z_1)}
{\sigma_p^*\eta(z)-\eta(z)}.
\end{equation}
From now on we omit the pull-back sign $\iota^*$
by the
inclusion $\iota :\Sigma_s\lrar T^*C$.

\begin{rem}
The existence of a canonical  choice of the 
integral (\ref{eq:B integral})
for the family of spectral curves
$\widetilde{\Sigma}\big|_V=\{\Sigma_s\}_{s\in V}$
 is significant
for the existence of the quantum curve,
starting from the recursion formula (\ref{eq:Wgn}).
Our choice of the trivialization of the homology
bundle $\{H_1(\Sigma_s,\bZ)\}_{s\in V}$
that we have made in the end of 
Section~\ref{sect:spectral} assures this unique 
existence. 
\end{rem}

\begin{rem}
Recently many calculations have been performed
to relate the Eynard-Orantin differentials
with intersection numbers of 
certain tautological classes on $\Mbar_{g,n}$
\cite{DBOSS, E2011}. All these calculations 
assume that the spectral curve is a ramified 
covering over $\bC$, and that the curve itself is 
just the disjoint union of small disks around
each ramification point. The location of 
these ramification points are arbitrarily chosen to 
represent the degree of freedom for deformations.

Here we emphasize that the spectral
curve $\Sigma_s$ is a \emph{global} object, and
that the ramification divisor $R_s$ on $\Sigma_s$ is not
an arbitrary set of  points. We view that
the heart of the Eynard-Orantin theory lies in the
global structure of the spectral curve, and hence
the calculation of the residues appearing in the
definition above has to be carried out globally,
not locally. In what follows, we perform this
very calculation. 

The relation between 
the local and global considerations
mentioned above gives us a non-trivial formula
of the result of our calculations in terms of 
tautological intersection numbers on $\Mbar_{g,n}$.
The identification of this formula is one of the
important questions that is not addressed in the
current paper.
\end{rem}

To actually compute
integrals, it is convenient 
to consider the case when both $z$ and $z_1$ are
close to a ramification point $p\in R_s$, but not 
quite equal. Then we have local expressions
\begin{align}
\label{eq:local omega}
\omega_s^{\sigma_p(z)-z}(z_1) 
&= \left(
\frac{1}{z_1-\sigma_p(z)}-\frac{1}{z_1-z} +O(1)
\right)dz_1 ,
\\
\label{eq:local B}
B_{\Sigma_s}(z,\sigma_p(z)) 
&=
\left(
 \frac{1}{(z-\sigma_p(z))^2}+O(1)
 \right)dzd\sigma_p(z),
 \\
 \label{eq:local eta}
 \eta(z) 
 &= h(z)dz.
\end{align}
We can also choose a small neighborhood of
$p$ such that 
\begin{equation}
\label{eq:local sigma}
\sigma_p(z) = -z,
\end{equation}
if necessary. In this case $z=0$ is the 
point $p\in R_s$. We also use formulas
\begin{equation}
\begin{aligned}
\label{eq:BK involution}
K_p(z,z_1) &= K_p(\sigma_p(z),z_1) 
= -K_p(z,\sigma_p(z_1)),
\\
B_{\Sigma_s}(z_1,\sigma_p(z_2)) &=
B_{\Sigma_s}(\sigma_p(z_1),z_2),
\\
h(\sigma_p(z)) &= h(z).
\end{aligned}
\end{equation}

\begin{prop}
\label{prop:Wgn properties}
For $2g-2+n>0$, the Eynard-Orantin
differential $W_{g,n}(z_1,\dots,z_n)$ is a 
symmetric meromorphic $n$-form on 
$\Sigma_s ^n$ with poles only at $z_i\in R_s$, 
$i=1, \dots, n$. It satisfies the following
balanced average property with respect to the
deck transformation:
\begin{equation}
\label{eq:Wgn balance}
\sum_{z_i\in \pi^{-1}(x_i)}
W_{g,n}(z_1,\dots,z_i,\dots,z_n)
= 0,\quad i=1, \dots, n,
\quad (g,n)\ne (0,2).
\end{equation}
Here we choose a non-branched point
$x_i\in C$, and  add 
 $W_{g,n}(z_1,\dots,z_i,\dots,z_n)$
for all $r$-points $z_i\in \pi^{-1}(x_i)$
on the fiber of $x_i$. (This is commonly known 
as the integration along the fiber.) 
\end{prop}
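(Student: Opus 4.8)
The plan is to prove all three assertions — symmetry, meromorphy with poles confined to $R_s$, and the balanced-average identity \eqref{eq:Wgn balance} — together by induction on $2g-2+n\ge 1$, the base cases being $W_{1,1}$ of \eqref{eq:W11} and $W_{0,3}$ of \eqref{eq:W03}, which are handled by a direct residue evaluation. The anti-invariance of $W_{g,n}$ under the deck transformation in the \emph{distinguished} variable $z_1$ actually requires no induction, and I would dispose of it first. In the recursion \eqref{eq:Wgn}, as in \eqref{eq:W11} and \eqref{eq:W03}, the entire dependence on $z_1$ enters through the single factor $K_p(z,z_1)=\omega_s^{\sigma_p(z)-z}(z_1)\big/\big(\sigma_p^*\eta(z)-\eta(z)\big)$. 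First I would record that the global involution $\sigma$ of \eqref{eq:sigma} fixes $W_{0,2}=B_{\Sigma_s}$, i.e. $(\sigma\times\sigma)^*B_{\Sigma_s}=B_{\Sigma_s}$: the pulled-back form is again symmetric with a double pole of the same leading coefficient along the diagonal (as $\sigma$ is a biholomorphism fixing the diagonal), and it is again $A$-cycle normalized because the $\sigma$-adapted choice \eqref{eq:A on Cs} of symplectic basis is permuted, up to orientation, by $\sigma_*$; uniqueness of Riemann's normalized fundamental form then forces equality, hence $B_{\Sigma_s}(z,\sigma z')=B_{\Sigma_s}(\sigma z,z')$ globally. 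Combining this with the definition \eqref{eq:B integral} of $\int_z^{\sigma_p(z)}B_{\Sigma_s}(\cdot,z_1)=\omega_s^{\sigma_p(z)-z}(z_1)$ and the substitution $\zeta\mapsto\sigma\zeta$ gives $\omega_s^{\sigma_p(z)-z}(\sigma z_1)=-\omega_s^{\sigma_p(z)-z}(z_1)$, i.e. $K_p(z,\sigma z_1)=-K_p(z,z_1)$ (the first line of \eqref{eq:BK involution}). Consequently every $W_{g,n}$ with $2g-2+n>0$ satisfies $W_{g,n}(\sigma z_1,z_2,\dots,z_n)=-W_{g,n}(z_1,\dots,z_n)$, which for the degree-$2$ covering is exactly the $i=1$ case of \eqref{eq:Wgn balance}. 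The value $(g,n)=(0,2)$ must be excluded here precisely because $B_{\Sigma_s}$ is $\sigma$-\emph{invariant}, not anti-invariant.

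Next I would settle the pole structure. Analyzing the integrand of \eqref{eq:Wgn} in the integration variable $z$ near a ramification point $p$: in a local coordinate with $\sigma_p(z)=-z$ one has $\sigma_p^*\eta(z)-\eta(z)=-2h(z)\,dz$, which vanishes to order $2$ at $p$ because $W_{0,1}=\eta$ has a double zero there, while $\omega_s^{\sigma_p(z)-z}(z_1)$ vanishes to order $1$ at $p$ since $\sigma_p(p)=p$; hence $K_p(\cdot,z_1)$ has only a simple pole at $z=p$ on $U_p$ and is otherwise holomorphic there apart from the simple poles at $z=z_1$ and $z=\sigma_p(z_1)$ carried by $\omega_s^{\sigma_p(z)-z}(z_1)$. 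By the inductive hypothesis the factors inside the bracket of \eqref{eq:Wgn}, being products of lower Eynard-Orantin differentials and of $W_{0,2}$'s, have poles in $z$ only along $R_s$ and along the divisors $z=z_j$, $\sigma_p(z)=z_j$. Choosing $z_1,\dots,z_n$ generically (off $R_s$ and off $\bigcup_p U_p$), the only pole of the integrand enclosed by $\gamma_p$ is $z=p$, so $W_{g,n}$ is holomorphic in each $z_i$ there; to see that the $z_i$-singularities are confined to $R_s$ one runs, for each variable separately, the standard Eynard-Orantin local argument at a point $p_0\in R_s$ — tracking the extra residue contributions that appear when $z_i$ enters the disc bounded by $\gamma_{p_0}$ and pairing them via $\sigma_{p_0}$-anti-invariance — to conclude that the only singularity in $z_i$ is at $z_i=p_0$. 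Every step here is a residue computation localized inside the discs $U_p$, so the compactness of $C$ and $\Sigma_s$ causes no difficulty.

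Finally, symmetry, which I expect to be the technically heaviest step. Symmetry of $W_{g,n}$ in $z_2,\dots,z_n$ is manifest from \eqref{eq:Wgn}. For the transposition $z_1\leftrightarrow z_j$ I would reproduce the Eynard-Orantin symmetry argument \cite{EO1}: assuming symmetry of all $W_{g',n'}$ with $2g'-2+n'<2g-2+n$, rewrite the $\oint_{\gamma_p}$-expression with $z_1$ distinguished as a single sum of residues over $R_s$, and then use the fundamental Cauchy-type identity relating $K_p$, $W_{0,1}$ and $W_{0,2}$, together with the facts that $W_{0,2}$ has its only pole (a double one) on the diagonal, is $A$-cycle normalized, and represents exterior differentiation when used as an integration kernel, to deform the contours and recognize the same expression with $z_j$ distinguished. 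The key point to verify is that this argument never uses the genus-$0$ shape of $W_{0,2}$ — only the four structural properties just listed, all of which hold for $B_{\Sigma_s}$ — so it transfers verbatim to the global setting; the computations are parallel to those of \cite{EB}. With symmetry available, the $z_1$-anti-invariance established above propagates to every variable, which completes \eqref{eq:Wgn balance}, and the pole statement of the previous paragraph likewise extends to all $z_i$, finishing the induction.
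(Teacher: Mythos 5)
Your overall route is the same as the paper's: the paper disposes of this proposition by localizing to an affine cover of $C$ and invoking the residue arguments of \cite{EO1}, and your proposal is essentially a reconstruction of that Eynard--Orantin induction (anti-invariance of the kernel in $z_1$, pole analysis at $R_s$, and the EO symmetry argument for $z_1\leftrightarrow z_j$, which you, like the paper, do not reprove in detail). The one place where you add genuinely useful content beyond the paper's one-line proof is the verification of the global inputs \eqref{eq:BK involution}: you correctly observe that $(\sigma\times\sigma)^*B_{\Sigma_s}=B_{\Sigma_s}$ and $\sigma^*\omega_s^{\sigma(z)-z}=-\omega_s^{\sigma(z)-z}$ are \emph{not} purely local facts but follow from the uniqueness of the $A$-normalized forms together with the fact that $\sigma_*$ permutes, up to orientation, the adapted basis \eqref{eq:A on Cs}; this is exactly the global compatibility that the compact, higher-genus setting requires and that the paper asserts without comment.

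There is, however, one concrete gap relative to the statement as written. Proposition~\ref{prop:Wgn properties} is formulated in Section~3 for a simply ramified covering of arbitrary degree $r$ (the balance \eqref{eq:Wgn balance} is a sum over all $r$ points of the fiber), whereas your proof of \eqref{eq:Wgn balance} uses the \emph{global} involution $\sigma$ of $T^*C$ and the identity $K_p(z,\sigma z_1)=-K_p(z,z_1)$, hence only yields the case $r=2$: for $r>2$ the covering is in general not Galois, $\sigma_p$ exists only near $p$, and "anti-invariance in $z_1$" is not even a meaningful global statement. To get the general case one needs the loop-equation-type argument that the fiberwise sum $(\pi_i)_*W_{g,n}$ has no poles at the branch points (the residue contributions cancel), so that it is a holomorphic differential on $C$ in $x_i$; and since $g(C)\ge 2$ here, holomorphy alone does \emph{not} force vanishing --- one must additionally use the vanishing of the $A$-periods of $W_{g,n}$ together with the fact that the chosen cycles \eqref{eq:A on Cs} push forward to the $A$-cycles of $C$. (If you intend the proposition only in the degree-$2$ setting actually used from Section~4 onward, your argument is complete on this point.) A minor related slip: your reason that $\sigma_p^*\eta-\eta$ vanishes to order $2$ at $p$ ("because $\eta$ has a double zero there") is special to the traceless $r=2$ case where $R_s=\Sigma_s\cap C$; at a general simple ramification point $\eta$ need not vanish, yet the difference still vanishes to second order because the odd part of $\eta$ in the local coordinate does --- the conclusion you need (simple pole of $K_p$ at $z=p$) is unaffected.
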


\begin{proof}
Since the assertion of the Proposition is
essentially a local statement, we can
take an affine covering of
 the base curve $C$,
 and prove the statement on each
affine piece. 
Although the proof is quite involved
and requires many steps
for an affine curve, the idea and
the technique are exactly the same
as those  in \cite{EO1}.
\end{proof}

\section{The differential recursion for
free energies}
\label{sect:integral}

The global property of the spectral curve we
are emphasizing in this paper is that we
can actually \emph{integrate and evaluate} the residue
calculations appearing in the definition 
of the Eynard-Orantin differentials. The purpose
of this section is to concretely perform this
evaluation. We start with giving the definition of
free energies. It is worth mentioning that
all our calculations are actually performed on 
the  family of spectral curves defined
on a contractible base space $V$ as
explained in Section~\ref{sect:spectral}.
Again to avoid cumbersome notations, we
suppress the $s$-dependence in what follows.

\begin{Def} 
The \textbf{free energy} of type $(g,n)$ is a 
function $F_{g,n}(z_1,\dots,z_n)$
defined on $\Sigma_s^n$ subject to the following two
conditions:
\begin{align}
\label{eq:primitive}
d_1\cdots d_n F_{g,n}(z_1,\dots,z_n)
&= W_{g,n}(z_1,\dots,z_n),
\\
\label{eq:Fgn balance}
\sum_{z_i\in \pi^{-1}(x_i)}
F_{g,n}(z_1,\dots,z_i,\dots,z_n)
&= 0,\quad i=1, \dots, n,
\quad (g,n)\ne (0,2).
\end{align}
Here we choose a non-branched point
$x_i\in C$, and  consider the integration 
of $F_{g,n}$ along the fiber of $x_i$ 
with respect to 
the projection $\pi:\Sigma\lrar C$ applied to the 
$i$-th component. 
\end{Def}

\begin{rem}
The primitive condition (\ref{eq:primitive}) alone
does not determine $F_{g,n}$ due to constants
of integration. For example, one can add any
function in less than $n$ variables to $F_{g,n}$. 
It is obvious that the vanishing condition of the
integration along the fiber
(\ref{eq:Fgn balance}), reflecting 
(\ref{eq:Wgn balance}),
uniquely determines the free energies.
The authors are indebted to Paul Norbury 
and Brad Safnuk for the idea
of imposing (\ref{eq:Fgn balance}) to define the 
unique free energies.
In the examples considered in
\cite{DMSS}, we know $F_{g,n}$  
from the beginning because
we start with an A-model counting problem that
defines the free energies via the Laplace transform.
In our current context, since we start with the
Eynard-Orantin theory, i.e., from the B-model side, 
we have no knowledge of what the corresponding
A-model is. 
\end{rem}

\begin{rem}
We exclude the case $(g,n) = (0,2)$ from the
balanced Galois
 average condition (\ref{eq:Fgn balance}). 
How to define $F_{0,2}$ is an extremely subtle
matter, and is also related to the heart of the
quantizability of the spectral curve $\Sigma_s$. 
We discuss this issue in detail in 
Section~\ref{sect:WKB}.
It is important to note that our choice of
$F_{0,2}(z,z)$ differs from the definition
given in \cite{GS}.
\end{rem}

From now on, we restrict ourselves
to the case of degree $2$ covering
$\pi:\Sigma_s\lrar C$. This restriction is
necessary due to several
technical reasons. 
Since the spectral curve $\Sigma_s$ is a degree $2$ 
covering, we have $R_s = \Sigma_s\cap C\subset T^*C$,
and the Galois conjugation $\sigma$ is global on
$\Sigma_s$, which is the same as the $(-1)$ involution
$$
\sigma: T^*C\lrar T^*C.
$$
In particular, 
\begin{equation}
\label{eq:sigma eta}
\sigma^*\eta = -\eta.
\end{equation}
We denote $\sigma_p = \sigma$, and drop the
reference point $p$ from the recursion kernel, because
it does not depend on the ramification point any more.
The following lemma indicates how we
calculate the residues in the integration formulas.

\begin{lem}
\label{lem:W11}
We calculate
\begin{equation}
\label{eq:W01calc}
W_{1,1}(z_1) = \frac{B_{\Sigma_s}(z_1,\sigma(z_1))}
{2\eta(z_1)}
\in H^0\big(\Sigma_s,\Omega_{\Sigma_s}^1\tensor 
\cO_{\Sigma_s}(4R_s)\big).
\end{equation}
\end{lem}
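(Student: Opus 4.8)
The plan is to evaluate the residue integral in the definition \eqref{eq:W11} of $W_{1,1}$ directly, using the explicit form \eqref{eq:kernel1} of the recursion kernel together with the local expansions \eqref{eq:local omega}, \eqref{eq:local B}, \eqref{eq:local eta} near a ramification point $p\in R_s$. Since $\pi:\Sigma_s\lrar C$ is a simple double cover, the sum over $p\in R_s$ is a sum of residues at the $4g-4$ points of $R_s$, and we analyze one such $p$ at a time in a coordinate $z$ with $\sigma(z)=-z$ so that $p$ is $z=0$. First I would write
\begin{equation*}
W_{1,1}(z_1)=\half\,\frac{1}{2\pi i}\sum_{p\in R_s}\oint_{\gam_p}
\frac{\omega_s^{-z-z}(z_1)}{\eta(-z)-\eta(z)}\,B_{\Sigma_s}(z,-z),
\end{equation*}
and observe that $\eta(-z)-\eta(z)=-2\eta(z)+O(z^2)\,dz = -2h(z)\,dz$ using $h(-z)=h(z)$ from \eqref{eq:BK involution}; since $\eta$ has a simple zero at $p$ (as $\iota^*\eta$ vanishes to order $2$ on $R_s$ in the local parameter downstairs, hence to order $1$ in $z$), the denominator has a simple zero at $z=0$. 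The $2$-form $B_{\Sigma_s}(z,-z)$ has, by \eqref{eq:local B}, a double pole at $z=0$, namely $\big(\tfrac{1}{(2z)^2}+O(1)\big)\cdot(-1)\,dz\,dz$ after substituting $d(-z)=-dz$; and $\omega_s^{-z-z}(z_1)$ is holomorphic in $z$ near $z=0$ for $z_1$ away from $p$.

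The key step is then the residue computation: the integrand has a pole of order three at $z=0$ coming from the double pole of $B_{\Sigma_s}(z,-z)$ and the simple zero of the denominator, so $\Res_{z=0}$ picks up the relevant Taylor coefficients. Expanding $\omega_s^{-z-z}(z_1)=\omega_s^{-z-b}(z_1)-\omega_s^{z-b}(z_1)$ in $z$ near $0$, the even part cancels and one is left with an odd function of $z$ vanishing at $z=0$; I would Taylor-expand it as $2\,\partial_z\omega_s^{z-b}(z_1)\big|_{z=0}\cdot z + O(z^3)$, and note $\partial_z\omega_s^{z-b}(z_1)\big|_{z=0}=B_{\Sigma_s}(0,z_1)$ by \eqref{eq:B and omega}. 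Combining this linear term with $B_{\Sigma_s}(z,-z)\sim -\tfrac{1}{4z^2}\,dz\,dz$ and the denominator $-2h(z)\,dz$, after careful bookkeeping of the factors of $2$ and the overall $\half$ the residue at $p$ evaluates to a multiple of $B_{\Sigma_s}(p,z_1)/\eta'(p)$; assembling these local contributions and recognizing that $\tfrac{B_{\Sigma_s}(z,\sigma(z))}{2\eta(z)}$ is a globally defined meromorphic $1$-form on $\Sigma_s$ whose only poles are at $R_s$ (double poles of $B_{\Sigma_s}$ against the simple zero of $\eta$ at each $p$, hence a pole of order up to $4$, consistent with the sheaf $\Omega_{\Sigma_s}^1\tensor\cO_{\Sigma_s}(4R_s)$ after accounting for the degree-$2$ zero of $\eta$), one deduces the two sides agree. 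Finally I would invoke the balanced average property of Proposition~\ref{prop:Wgn properties} together with the fact that the right-hand side $B_{\Sigma_s}(z_1,\sigma(z_1))/2\eta(z_1)$ is manifestly anti-invariant in the needed sense to pin down the identification uniquely; alternatively, since both sides are meromorphic $1$-forms with poles only on $R_s$, it suffices to match principal parts at each $p\in R_s$, which is exactly the residue computation above.

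The membership statement $W_{1,1}\in H^0\big(\Sigma_s,\Omega_{\Sigma_s}^1\tensor\cO_{\Sigma_s}(4R_s)\big)$ then follows by a pole-order count: at $p\in R_s$, $B_{\Sigma_s}(z_1,\sigma(z_1))$ has a double pole in the local coordinate and $\eta(z_1)$ has a simple zero, but because $\sigma(z_1)=-z_1$ the numerator $B_{\Sigma_s}(z_1,-z_1)$ as a quadratic differential behaves like $\tfrac{dz_1^2}{4 z_1^2}$ while $\eta(z_1)$ vanishes to order one, giving $W_{1,1}$ a pole of order exactly $3$ in $dz_1$-units, safely inside $4R_s$; away from $R_s$ it is holomorphic since $\eta$ is nonvanishing and $B_{\Sigma_s}(z_1,-z_1)$ has no poles off the diagonal and $z_1=\sigma(z_1)$ only on $R_s$. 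I expect the main obstacle to be the precise bookkeeping of the constants — the interplay of the factor $\half$, the factor $2$ in $\eta(-z)-\eta(z)$, the $\tfrac14$ in $\tfrac{1}{(2z)^2}$, the sign from $d(-z)=-dz$, and the factor $2$ from the odd part of $\omega_s^{-z-z}$ — so that the final coefficient comes out to exactly $1/2$ rather than some other rational number; this is routine but must be done with care, and it is the one place where a dropped sign or factor would invalidate the clean formula \eqref{eq:W01calc}.
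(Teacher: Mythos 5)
Your strategy---evaluate the residues in \eqref{eq:W11} locally at each $p\in R_s$ from the expansions \eqref{eq:local omega}--\eqref{eq:local eta} and then match principal parts---is not what the paper does, and as written it has a genuine gap. First, a factual slip that matters: in a coordinate with $\sigma(z)=-z$, the form $\iota^*\eta=h(z)\,dz$ has a \emph{double} zero at $p$, not a simple one (the paper notes that $\iota^*\eta$ has a degree-$2$ zero at each point of $R_s$ and later uses $h(z)\approx z^2$; evenness $h(-z)=h(z)$ from \eqref{eq:BK involution} forces the vanishing order to be even). The integrand still has a pole of order $3$ at $z=0$, but only because $\omega_s^{\sigma(z)-z}(z_1)$ vanishes to first order there; correspondingly $W_{1,1}$ has a pole of order $4$ (not $3$) at each $p$, which is exactly why the target sheaf is $\Omega_{\Sigma_s}^1\tensor\cO_{\Sigma_s}(4R_s)$. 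More seriously, because the pole at $p$ has order $3$, its residue is \emph{not} determined by the leading-order local models: a short expansion shows it involves the subleading Taylor coefficient of $h$, the regular $O(1)$ part of $B_{\Sigma_s}(z,\sigma(z))$ at $p$, and the cubic coefficient of $\omega_s^{\sigma(z)-z}(z_1)$ --- global quantities that \eqref{eq:local omega}--\eqref{eq:local eta} leave unspecified. So the ``careful bookkeeping of constants'' you defer cannot in fact be completed from the data you allow yourself; no amount of care with factors of $2$ repairs this.

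The paper's proof avoids the local computation altogether by a global contour move on the compact curve $\Sigma_s$: using \eqref{eq:kernel1} and $\sigma^*\eta=-\eta$, the integrand $\frac{\omega_s^{\sigma(z)-z}(z_1)}{2\eta(z)}\,B_{\Sigma_s}(z,\sigma(z))$, viewed as a meromorphic form in $z$, has poles only at $R_s$ and at the two \emph{simple} poles $z=z_1$ and $z=\sigma(z_1)$ of $\omega_s^{\sigma(z)-z}(z_1)$; by the residue theorem the sum of residues over $R_s$ equals minus the sum of the residues at those two points, and the latter are read off at once from Cauchy's formula, combining via $\eta(\sigma(z_1))=-\eta(z_1)$ to give the right-hand side of \eqref{eq:W01calc}. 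This is precisely the ``global'' evaluation that the paper stresses as the payoff of working with a compact spectral curve. Finally, your fallback --- ``it suffices to match principal parts at each $p\in R_s$'' --- is not sufficient on a curve of genus $\hat{g}=4g-3>0$: two meromorphic $1$-forms with equal principal parts differ by a holomorphic $1$-form, and anti-invariance under $\sigma$ (your appeal to \eqref{eq:Wgn balance}) only confines that difference to the $(3g-3)$-dimensional Prym space of anti-invariant holomorphic forms; to pin the difference down to zero you would also have to compare $A$-periods, coming from the $A$-normalization of $B_{\Sigma_s}$, which your argument never invokes.
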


\begin{rem}
Since our geometric setting is exactly the same,
 it is not surprising that the same
formula appears in \cite{KZ}, though
for a different purpose.
\end{rem}

\begin{proof}
Taking the advantage of (\ref{eq:kernel1}) and
(\ref{eq:sigma eta}),
 let us first identify the poles of the differential
form
$$
-\frac{\omega_s^{\sigma(z)-z}(z_1)}
{2\eta(z)} B_{\Sigma_s}(z,\sigma(z))
$$
in $z$, where $z_1\in \Sigma_s$ is a point arbitrarily 
chosen and fixed. We see that $z=p$ for every 
$p\in R_s$ is a pole, since $\eta$ vanishes on $R_s$. 
The fundamental form $B_{\Sigma_s}(z,z_1)$ has 
poles only along the diagonal, thus 
$B_{\Sigma_s}(z,\sigma(z))$ also has poles at $R_s$. 
Besides $R_s$, the form has simple poles at
$z=z_1$ and $z=\sigma(z_1)$. Since these are the
only poles, and remembering that
the integration variable is $z$, we use the
Cauchy integration formula to calculate
\begin{align*}
W_{1,1}(z_1) 
&= 
\half\;
\frac{1}{2\pi i}\sum_{p\in R_s}
\oint_{\gam_p} K(z,z_1) B_{\Sigma_s}(z,\sigma(z))
\\
&=
\half\;
\frac{1}{2\pi i}
\oint_{\gam_{z_1}\cup
\gam_{\sigma(z_1)} }
\frac{\omega_s^{\sigma(z)-z}(z_1)}
{2\eta(z)} B_{\Sigma_s}(z,\sigma(z))
\\
&=
\half\; \left(
-\frac{B_{\Sigma_s}(z_1,\sigma(z_1))}{2\eta(\sigma(z_1))}
+\frac{B_{\Sigma_s}(z_1,\sigma(z_1))}{2\eta(z_1)}
\right)
\\
&=
\frac{B_{\Sigma_s}(z_1,\sigma(z_1))}{2\eta(z_1)}.
\end{align*}
It is important
to note that $W_{1,1}(z_1)$ has poles only
at the ramification divisor $R_s$.
\end{proof}

It is clear from the above example that integration 
against $\omega_s^{\sigma(z)-z}(z_1)$ is 
exactly the Cauchy integration formula. 
Similarly, integration against 
$B_{\Sigma_s}(z_1,z_2)$ is the differentiation. Let
$f(z_1)$ be a meromorphic function on 
$\Sigma_s$. Then we have
\begin{equation}
\label{eq:B int}
\frac{1}{2\pi i}\oint_{\gam_{z_2}}f(z_1)
B_{\Sigma_s}(z_1,z_2)
=d_2 f(z_2),
\end{equation}
where the integration is taken with respect to the 
variable $z_1$. We note that the result is a
meromorphic $1$-form on $\Sigma_s$. 

\begin{lem}
\label{lem:W03}
We have
\begin{multline}
\label{eq:W03calc}
W_{0,3}(z_1,z_2,z_3)
=
\frac{1}{2\eta(z_1)}
\bigg(
B_{\Sigma_s}(z_1,z_2)B_{\Sigma_s}(z_1,\sigma(z_3))
+
B_{\Sigma_s}(z_1,z_3)B_{\Sigma_s}(z_1,\sigma(z_2))
\bigg)
\\
+
d_2\left(
\frac{\omega_s^{\sigma(z_2)-z_2}(z_1)
B_{\Sigma_s}(z_2,\sigma(z_3))}{2\eta(z_2)}
\right)
+
d_3\left(
\frac{\omega_s^{\sigma(z_3)-z_3}(z_1)
B_{\Sigma_s}(z_2,\sigma(z_3))}{2\eta(z_3)}
\right).
\end{multline}
\end{lem}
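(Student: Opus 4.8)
The goal is to prove the formula \eqref{eq:W03calc} for $W_{0,3}$ by direct evaluation of the residue integral \eqref{eq:W03}, in exactly the same spirit as the proof of Lemma~\ref{lem:W11}. I will first substitute the kernel in the calculated form \eqref{eq:kernel1}, using $\sigma_p=\sigma$ and $\sigma^*\eta=-\eta$, so that
\[
W_{0,3}(z_1,z_2,z_3) = \frac{1}{2}\cdot\frac{1}{2\pi i}\sum_{p\in R_s}\oint_{\gam_p}
\frac{\omega_s^{\sigma(z)-z}(z_1)}{2\eta(z)}
\Bigl(B_{\Sigma_s}(z,z_2)B_{\Sigma_s}(\sigma(z),z_3)+B_{\Sigma_s}(z,z_3)B_{\Sigma_s}(\sigma(z),z_2)\Bigr).
\]
The next step is to identify, for fixed generic $z_1,z_2,z_3$, all poles in the integration variable $z$ of the $1$-form being integrated. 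As in Lemma~\ref{lem:W11}, the factor $1/\eta(z)$ contributes simple poles at each $p\in R_s$; the factor $B_{\Sigma_s}(z,z_2)$ contributes a double pole at $z=z_2$, and $B_{\Sigma_s}(\sigma(z),z_3)$ a double pole at $z=\sigma(z_3)$ (and symmetrically for the second summand), while $\omega_s^{\sigma(z)-z}(z_1)$ is holomorphic away from $z=z_1,\sigma(z_1)$ but those points are not inside the loops $\gam_p$. Since $\sum_{p\in R_s}\oint_{\gam_p}$ picks up exactly the residues at the ramification points, and since the total sum of all residues on the compact curve $\Sigma_s$ vanishes, I will replace $\sum_{p\in R_s}\oint_{\gam_p}$ by minus the sum of the residues at the remaining poles, namely at $z\in\{z_2,\sigma(z_3)\}$ from the first term and $z\in\{z_3,\sigma(z_2)\}$ from the second, together with possible contributions at $z=z_1,\sigma(z_1)$ — but $\omega_s^{\sigma(z)-z}(z_1)$ as a function of $z$ has a pole only where $\omega_s^{z-b}(z_1)$ or $\omega_s^{\sigma(z)-b}(z_1)$ does, i.e.\ it is regular in $z$ at $z_1$ in the relevant normalization, so only $z_2,z_3,\sigma(z_2),\sigma(z_3)$ matter.

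Carrying out these residue computations is the core of the argument. At $z=z_2$ the form $B_{\Sigma_s}(z,z_2)$ has a double pole, so the residue of $\omega_s^{\sigma(z)-z}(z_1)\,\frac{B_{\Sigma_s}(z,z_2)B_{\Sigma_s}(\sigma(z),z_3)}{2\eta(z)}$ is computed by expanding $\omega_s^{\sigma(z)-z}(z_1)\,\frac{B_{\Sigma_s}(\sigma(z),z_3)}{2\eta(z)}$ to first order in $z-z_2$ and using $\mathrm{Res}_{z=z_2}B_{\Sigma_s}(z,z_2)\cdot g(z)=d_2 g(z_2)$ (the differentiation property \eqref{eq:B int}); this produces the term $d_2\bigl(\omega_s^{\sigma(z_2)-z_2}(z_1)B_{\Sigma_s}(z_2,\sigma(z_3))/2\eta(z_2)\bigr)$ after using $B_{\Sigma_s}(\sigma(z_2),z_3)=B_{\Sigma_s}(z_2,\sigma(z_3))$. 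At $z=\sigma(z_3)$ the factor $B_{\Sigma_s}(\sigma(z),z_3)$ has a double pole; the residue there, after the change of variable $z\mapsto\sigma(z)$ and the identities \eqref{eq:BK involution}, \eqref{eq:sigma eta}, collapses to the ``diagonal'' term $\frac{1}{2\eta(z_1)}B_{\Sigma_s}(z_1,z_2)B_{\Sigma_s}(z_1,\sigma(z_3))$ — here one uses that $\omega_s^{\sigma(z)-z}(z_1)$ evaluated near $z=\sigma(z_3)$, combined with $B_{\Sigma_s}(z,z_2)$, contributes $B_{\Sigma_s}(z_1,z_2)$ via the pole structure, together with $1/\eta$ evaluated on the orbit. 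Adding the symmetric contributions from the second summand $B_{\Sigma_s}(z,z_3)B_{\Sigma_s}(\sigma(z),z_2)$ — residue at $z=z_3$ giving the $d_3$ term, residue at $z=\sigma(z_2)$ giving $\frac{1}{2\eta(z_1)}B_{\Sigma_s}(z_1,z_3)B_{\Sigma_s}(z_1,\sigma(z_2))$ — and combining with the overall factor $\tfrac12$ yields exactly \eqref{eq:W03calc}. Finally I will note that $d_2$ in the third line acts on the $z_2$-dependence of $\omega_s^{\sigma(z_2)-z_2}(z_1)B_{\Sigma_s}(z_2,\sigma(z_3))/2\eta(z_2)$, and symmetrically $d_3$ on the $z_3$-dependence; the asymmetry of the written formula (the factor $B_{\Sigma_s}(z_2,\sigma(z_3))$ appearing in both correction terms) is genuine and reflects how the residues at $z_2$ and $z_3$ are coupled through the deck transformation.

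The main obstacle I anticipate is bookkeeping the orientation and sign conventions when trading $\sum_{p\in R_s}\oint_{\gam_p}$ for the negative sum of the other residues, and keeping straight which of $B_{\Sigma_s}(z_1,z_2)$ versus $B_{\Sigma_s}(z_1,\sigma(z_2))$ appears after each change of variable $z\mapsto\sigma(z)$ — the involution relations in \eqref{eq:BK involution} must be applied with care, and a single misplaced sign in $\sigma^*\eta=-\eta$ propagates. A secondary subtlety is confirming that $\omega_s^{\sigma(z)-z}(z_1)$, with the $A$-cycle normalization fixed in Section~\ref{sect:spectral}, really has no pole in $z$ at $z=z_1$ (only the expected logarithmic/simple-pole behavior in the $z_1$ variable), so that the only contributing residues are the four listed; this follows from \eqref{eq:B integral} and the definition of $\omega_s^{a-b}$, but should be stated explicitly. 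Once the sign conventions are pinned down, the computation is entirely parallel to Lemma~\ref{lem:W11}, only with one additional $B_{\Sigma_s}$ factor producing the extra $d_2$ and $d_3$ exact terms.
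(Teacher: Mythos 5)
Your overall strategy (deform the contour using the vanishing of the total residue sum on the compact curve $\Sigma_s$, pair each pole with its $\sigma$-conjugate against the overall factor $\tfrac12$, evaluate simple poles by the Cauchy formula and double poles of $B_{\Sigma_s}$ by the differentiation property \eqref{eq:B int}) is the same as the paper's. However, the execution contains a genuine error that would make the computation come out wrong. You assert that $\omega_s^{\sigma(z)-z}(z_1)$ is regular in $z$ at $z=z_1$ and $z=\sigma(z_1)$, so that "only $z_2,z_3,\sigma(z_2),\sigma(z_3)$ matter." This is false: by \eqref{eq:local omega}, $\omega_s^{\sigma(z)-z}(z_1)=\bigl(\tfrac{1}{z_1-\sigma(z)}-\tfrac{1}{z_1-z}+O(1)\bigr)dz_1$, so as a function of $z$ it has simple poles at $z=z_1$ and $z=\sigma(z_1)$ — indeed, in the proof of Lemma~\ref{lem:W11}, which you say you are imitating, these are precisely the only poles besides $R_s$, and they carry the whole answer there. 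In the present lemma the Cauchy-formula contributions from $z=z_1$ and $z=\sigma(z_1)$ (equal to each other, halved by the $\tfrac12$) evaluate $\tfrac{1}{2\eta(z)}\bigl(B_{\Sigma_s}(z,z_2)B_{\Sigma_s}(\sigma(z),z_3)+B_{\Sigma_s}(z,z_3)B_{\Sigma_s}(\sigma(z),z_2)\bigr)$ at $z=z_1$ and, via $B_{\Sigma_s}(\sigma(z_1),z_j)=B_{\Sigma_s}(z_1,\sigma(z_j))$, produce exactly the first line of \eqref{eq:W03calc}.

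Consequently your attribution of the first-line ("diagonal") terms to the residues at $z=\sigma(z_3)$ and $z=\sigma(z_2)$ cannot be right. At $z=\sigma(z_3)$ the only singular factor is the double pole of $B_{\Sigma_s}(\sigma(z),z_3)$; everything else ($\omega_s^{\sigma(z)-z}(z_1)$, $B_{\Sigma_s}(z,z_2)$, $1/\eta(z)$) is regular there for generic distinct $z_1,z_2,z_3$, so the residue is a derivative-type term, and there is no mechanism by which "$B_{\Sigma_s}(z,z_2)$ contributes $B_{\Sigma_s}(z_1,z_2)$" when $z$ sits at $\sigma(z_3)$ rather than at $z_1$. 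In fact, since the full integrand is invariant under $z\mapsto\sigma(z)$ (using $\sigma^*\eta=-\eta$ and $\omega_s^{z-\sigma(z)}=-\omega_s^{\sigma(z)-z}$, which swaps the two summands), the residue at $\sigma(z_j)$ merely duplicates the residue at $z_j$ — this duplication is exactly what the overall $\tfrac12$ compensates — so it reproduces the $d_2$ and $d_3$ exact terms a second time. Following your pole list literally would therefore yield only the second line of \eqref{eq:W03calc} and miss the first line entirely. The fix is simply to restore the simple poles of $\omega_s^{\sigma(z)-z}(z_1)$ at $z=z_1,\sigma(z_1)$ to the list of contributing residues, as in the paper's proof.
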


\begin{proof}
This time the change of contour $\sqcup_{p\in R_s}
\gam_p$ to other poles picks up contributions
from $z=z_i$ and $z=\sigma(z_i)$ for $i=1,2,3$.
As in the previous case, the contributions from 
$z=z_i$ and $z=\sigma(z_i)$ are always exactly
the same, 
which are compensated by the overall factor $1/2$. 
Then the calculations are performed at each
pole. For simple poles we 
use the Cauchy integration formula
with respect to 
$\omega_s^{\sigma(z)-z}(z_1)$, which
produces the first line of 
(\ref{eq:W03calc}). The second
line comes from the 
double poles of the Riemann fundamental
form, as explained in (\ref{eq:B int}).
\end{proof}

In terms of the local coordinate $z$ of 
(\ref{eq:local omega})-(\ref{eq:local sigma}),
we can approximate that $h(z)=z^2$. Then we 
have 
$$
W_{0,3}(z_1,z_2,z_3) = -\frac{dz_1dz_2dz_3}
{z_1^2 z_2^2 z_3^2} + O(1)dz_1dz_2dz_3.
$$
It is surprising that $W_{0,3}(z_1,z_2,z_3)$ has
poles  only at $z_i=p\in R_s$ for $i=1,2,3$, and
not along any diagonals.

\begin{thm}
\label{thm:Fgn recursion}
For $2g-2+n\ge 2$, the free energies satisfy the
following differential recursion formula:
\begin{multline}
\label{eq:Fgn recursion}
d_1 F_{g,n}(z_1,\dots,z_n)
\\
= -\sum_{j=2}^n 
\left[
\frac{\omega_s^{z_j-\sigma(z_j)}
(z_1)}{2\eta(z_1)}\cdot 
d_1F_{g,n-1}\big(z_{[\hat{j}]}\big)
-
\frac{\omega_s^{z_j-\sigma(z_j)}
(z_1)}{2\eta(z_j)}\cdot 
d_jF_{g,n-1}\big(z_{[\hat{1}]}\big)
\right]
\\
-\frac{1}{2\eta(z_1)}
d_{u_1}d_{u_2}
\left.
\left[F_{g-1,n+1}
\big(u_1,u_2,z_{[\hat{1}]}\big)
+\sum_{\substack{g_1+g_2=g\\
I\sqcup J=[\hat{1}]}}^{\text{stable}}
F_{g_1,|I|+1}(u_1,z_I)F_{g_2,|J|+1}(u_2,z_J)
\right]
\right|_{\substack{u_1=z_1\\u_2=z_1}}.
\end{multline}
\end{thm}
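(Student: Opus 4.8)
The plan is to argue by induction on $2g-2+n\ge 2$, the induction simultaneously establishing the unique existence of the balanced free energies $F_{g,n}$ and the formula \eqref{eq:Fgn recursion}. The recursion \eqref{eq:Wgn} expresses $W_{g,n}$ through the $W_{g',n'}$ with $1\le 2g'-2+n'<2g-2+n$; at the bottom stratum $2g-2+n=1$ the relevant data are the explicit formulas of Lemmas~\ref{lem:W03} and~\ref{lem:W11}, from which one reads off $F_{0,3}$ and $F_{1,1}$ together with their balanced normalization, checking along the way that they have the shape of \eqref{eq:Fgn recursion} --- the case $(g,n)=(1,1)$ requiring the special diagonal value of $F_{0,2}$ alluded to in the remarks, so that $d_{u_1}d_{u_2}F_{0,2}(u_1,u_2)\big|_{u_1=u_2=z_1}$ is interpreted as the finite object reproducing the term $\frac{B_{\Sigma_s}(z_1,\sigma(z_1))}{2\eta(z_1)}$ of Lemma~\ref{lem:W11}. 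For the inductive step I start from \eqref{eq:Wgn}, insert the global involution ($\sigma_p=\sigma$, $\sigma^*\eta=-\eta$, $R_s=\Sigma_s\cap C$) and the explicit kernel \eqref{eq:kernel1}, and then the key move --- the one that genuinely uses the compactness of the base curve, in contrast with the affine recursion of \cite{EO1} --- is to evaluate the $z$-contour integral $\frac{1}{2\pi i}\sum_{p\in R_s}\oint_{\gam_p}(\cdots)$ globally.

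Since $\Sigma_s$ is compact, the integrand is a meromorphic $1$-form in $z$ with zero total residue, so the loops around $R_s$ may be traded for minus the residues at the remaining poles. By Proposition~\ref{prop:Wgn properties} each stable $W_{g',n'}$ appearing on the right of \eqref{eq:Wgn} has poles only along $R_s$ and $B_{\Sigma_s}$ has poles only on the diagonal, so the remaining poles in $z$ are precisely the simple poles of the recursion kernel at $z=z_1,\sigma(z_1)$ and the double poles of the $W_{0,2}$-factors at $z=z_j,\sigma(z_j)$ for $j=2,\dots,n$. At $z=z_1$ and $z=\sigma(z_1)$ I read off the residue via the Cauchy property of $\omega_s^{\sigma(z)-z}(z_1)$; using $\eta\circ\sigma=-\eta$ together with the invariance of the bracket of \eqref{eq:Wgn} under $z\mapsto\sigma(z)$, the two residues coincide and absorb the overall factor $\half$, leaving $\frac{1}{2\eta(z_1)}$ times the bracket evaluated at $z=z_1$. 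At $z=z_j$ and $z=\sigma(z_j)$ the double pole of $W_{0,2}$ converts the residue into the exterior derivative $d_j$ of the remaining factor, exactly as in \eqref{eq:B int}, the $z_j$- and $\sigma(z_j)$-contributions once again agreeing.

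To recognize the outcome as the right-hand side of \eqref{eq:Fgn recursion}, I then use the inductive hypothesis: each stable $W_{g',n'}$ occurring in the residues is rewritten as $d_1\cdots d_{n'}F_{g',n'}$ and the $W_{g-1,n+1}$-term and the products $W_{g_1,\bullet}W_{g_2,\bullet}$ as $d_{u_1}d_{u_2}$ of the corresponding $F$'s. The $\sigma$-antisymmetry of the free energies in each variable --- which holds for $(g',n')\ne(0,2)$ because for a double cover the balanced-average condition \eqref{eq:Fgn balance} reads $F(\dots,\sigma(z_i),\dots)=-F(\dots,z_i,\dots)$ --- lets me move the Galois conjugate arguments around, and the $z_j$-dependent combinations of $B_{\Sigma_s}$'s produced by the $d_j$'s get recognized as the derivatives $d_j\omega_s^{z_j-\sigma(z_j)}(z_1)=B_{\Sigma_s}(z_j,z_1)-B_{\Sigma_s}(\sigma(z_j),z_1)$ of the prefactors appearing in \eqref{eq:Fgn recursion}. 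The computation thereby exhibits a function $G(z_1,\dots,z_n)$ with $d_1\cdots d_nG=W_{g,n}$ whose $z_1$-differential $d_1G$ is exactly the right-hand side of \eqref{eq:Fgn recursion}. Finally, each term of that right-hand side carries a prefactor ($\omega_s^{z_j-\sigma(z_j)}(z_1)/\eta(z_1)$ or $1/\eta(z_1)$) anti-invariant under $z_1\mapsto\sigma(z_1)$, while the $F_{g',n'}$ entering it are balanced by induction; hence $G$ satisfies \eqref{eq:Fgn balance}, and by the uniqueness of the balanced primitive $G=F_{g,n}$. This establishes both the existence of $F_{g,n}$ and the identity \eqref{eq:Fgn recursion}.

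The step I expect to be the main obstacle is the joint residue evaluation and reorganization of the middle two paragraphs. One must first be certain that the integrand has no poles in $z$ outside $R_s\cup\{z_j,\sigma(z_j)\}_{j=1}^n$: this rests on $\omega_s^{\sigma(z)-z}(z_1)$ being assembled from the globally defined $A$-normalized abelian differential of the third kind of Section~\ref{sect:spectral}, hence on the $s$-independent trivialization of the homology bundle $\{H_1(\Sigma_s,\bZ)\}_{s\in V}$ fixed there. Then there is the bookkeeping of signs, of the precise manner in which the factor $\half$ is consumed by the $z\leftrightarrow\sigma(z)$ pairing, and of the reorganization matching the $B_{\Sigma_s}$-combinations with the $\omega_s^{z_j-\sigma(z_j)}(z_1)$-prefactors. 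A secondary delicate point is the $(g-1,n+1)=(0,2)$ term of the split sum, for which $d_{u_1}d_{u_2}F_{0,2}(u_1,u_2)\big|_{u_1=u_2=z_1}$ must be given meaning through the special definition of $F_{0,2}$ on the diagonal (distinct from that of \cite{GS}) and reconciled with the computation of $W_{1,1}$; this is precisely the issue deferred to Section~\ref{sect:WKB}.
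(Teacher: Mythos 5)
Your proposal is correct and follows essentially the same route as the paper: the core step in both is the global evaluation of the residue integral in \eqref{eq:Wgn}, made possible by compactness and Proposition~\ref{prop:Wgn properties}, with the Cauchy property of $\omega_s^{\sigma(z)-z}(z_1)$ handling the poles at $z=z_1,\sigma(z_1)$, the double-pole/differentiation property \eqref{eq:B int} of $B_{\Sigma_s}$ handling $z=z_j,\sigma(z_j)$, and the factor $\half$ absorbed by the $z\leftrightarrow\sigma(z)$ pairing. The only difference is organizational (the paper differentiates the claimed formula by $d_2\cdots d_n$ and matches it against the residue evaluation, while you integrate the residue evaluation and invoke uniqueness of the balanced primitive \eqref{eq:Fgn balance}), and your worry about an $F_{0,2}$ term in the split sum is moot since for $2g-2+n\ge 2$ the unstable type $(0,2)$ never occurs there.
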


\begin{rem}
It has to be emphasized that (\ref{eq:Fgn recursion})
is given in terms of the exterior differentiation 
and contraction operations so that the equation
is indeed coordinate independent. 
The labels $z_1,\dots,z_n$ are simply indicating
which factor of the product $\Sigma_s^n$ the 
operation is taking place. They are not a
\emph{coordinate} of the spectral curve.
\end{rem}

\begin{rem}
Although we do not specify the $s\in V$
dependence of $F_{g,n}$ in the 
formula, (\ref{eq:Fgn recursion})
holds for the family of functions
$\{F_{g,n}^s\}_{s\in V}$.
\end{rem}

\begin{proof}
We wish to derive (\ref{eq:Wgn}) from 
(\ref{eq:Fgn recursion}).
We first recall the basic relations 
$$
d_z \omega^{z-b}_s(z_1) = B_{\Sigma_s}(z,z_1)
\qquad \text{and}\qquad
\omega^{z-b}_s(z_1)+\omega^{b-a}_s(z_1)
=\omega^{z-a}_s(z_1).
$$
Next let us apply the differentiation $d_2\cdots d_n$
everywhere in (\ref{eq:Fgn recursion}). The result is
\begin{multline}
\label{eq:Wgn intermediate}
W_{g,n}(z_1,\dots,z_n)
\\
=
-\sum_{j=2}^n \left[
\frac{1}{2\eta(z_1)}
\bigg(W_{0,2}(z_1,z_j)-W_{0,2}\big(z_1,\sigma(z_j)
\big)\bigg)W_{g,n-1}\big(z_{[\hat{j}]}\big)
\right]
\\
-\sum_{j=2}^n d_j
\left[
\frac{1}{2\eta(z_j)}
\omega_s^{\sigma(z_j)-z_j}W_{g,n-1}\big(z_{[\hat{1}]}\big)
\right]
\\
-\frac{1}{2\eta(z_1)}
\left[W_{g-1,n+1}
\big(u_1,u_2,z_{[\hat{1}]}\big)
\bigg|_{\substack{u_1=z_1\\u_2=z_1}}
+\sum_{\substack{g_1+g_2=g\\
I\sqcup J=[\hat{1}]}}^{\text{stable}}
W_{g_1,|I|+1}(z_1,z_I)W_{g_2,|J|+1}(z_1,z_J)
\right].
\end{multline}

It is time to evaluate the
residue integration in (\ref{eq:Wgn})
for $2g-2+n>1$. 
First we change
the integration contour from 
$\sum_{p\in R_s}\oint_{\gam_p}$ 
to the diagonals $z=z_j$ and $z=\sigma(z_j)$
for $j=1,2,\dots,n$. We can do this, because of
Proposition~\ref{prop:Wgn properties}, we know
that
$W_{g,n}$ has poles
only at $R_s$ for $2g-2+n>0$.
As noted in the example calculations 
Lemma~\ref{lem:W11} and Lemma~\ref{lem:W03}
above, the residue
contributions from $z=z_i$ and $z=\sigma(z_i)$
are always the same, and are compensated by
the overall factor of $1/2$ in the formula.
Thus we have
\begin{multline}
\label{eq:Wgn intermediate 2}
W_{g,n}(z_1,\dots,z_n)
=
\frac{1}{2\pi \sqrt{-1}}\sum_{i=1}^n
\oint_{\gam_{z_i}} \frac{\omega_s^{\sigma(z)-z}
(z_1)}{2\eta(z)}
\\
\times
\Bigg[
\sum_{j=2}^n \bigg(
W_{0,2}(z,z_j) 
W_{g,n-1}(\sigma(z),z_{[\hat{1},\hat{j}]})
+
W_{0,2}(\sigma(z),z_j) 
W_{g,n-1}(z,z_{[\hat{1},\hat{j}]})
\bigg)
\\
+
W_{g-1,n+1}\big(z,\sigma(z),z_{[\hat{1}]}\big)
+
\sum_{\substack{g_1+g_2=g
\\
I\sqcup J = \{2,\dots,n\}}}
^{\text{stable}}
W_{g_1,|I|+1}(z,z_I)W_{g_2,|J|+1}(\sigma(z),z_J)
\Bigg].
\end{multline}
The contribution from the integration
around $z=z_1$ comes from the simple
pole of the differential form
$\omega_s^{\sigma(z)-z}
(z_1)$. The integration is done by 
the Cauchy integration formula, and the result is
\begin{multline*}
-\frac{1}{2\eta(z_1)}
\sum_{j=2}^n 
\bigg(
W_{0,2}(z_1,z_j) 
-
W_{0,2}\big(z_1,\sigma(z_j)\big) 
\bigg)
W_{g,n-1}(z_{[\hat{j}]})
\\
-\frac{1}{2\eta(z_1)}
\left[
W_{g-1,n+1}\big(z_1,z_1,z_{[\hat{1}]}\big)
+
\sum_{\substack{g_1+g_2=g
\\
I\sqcup J = \{2,\dots,n\}}}
^{\text{stable}}
W_{g_1,|I|+1}(z_1,z_I)
W_{g_2,|J|+1}\big(\sigma(z_1),z_J\big)
\right].
\end{multline*}
Here we have used (\ref{eq:Wgn balance}).
We have thus recovered
 the first and the third lines of
the right-hand side of (\ref{eq:Wgn intermediate}).

The contribution in (\ref{eq:Wgn intermediate 2})
from the integration around  $z=z_j$, $j\ge 2$, comes
from the diagonal double poles of $W_{0,2}(z,z_j)$.
Since $W_{0,2} = B_{\Sigma_s}$ acts as the differentiation
kernel (\ref{eq:B int}), it is easy to see that
the result is exactly the same as the 
second line of the right-hand side of 
(\ref{eq:Wgn intermediate}).
This completes the proof.
\end{proof}

\section{The $\lam$-connections and the
WKB method}
\label{sect:lam}

The precise notion we need to describe our quantum 
curve is Deligne's $\lam$-connection, where
$\lam$ is  a formal parameter.
In physics the notation $\lam=\hbar$ is commonly
used. Since the literature on quantum curves
consistently
use the Planck constant notation, we adopt it here
as well.
In this section we review the
materials on $\lam$-connections that we need in
this paper, following
 the excellent article
of Arinkin \cite{A}. In what follows, when
we say an $\hbar$-connection,  we are
indeed referring to a $\lam$-connection with 
$\lam=\hbar$.
The most important feature of the 
$\hbar$-connections is that the 
WKB approximation method can be 
applied to this type of connections.

\begin{Def}[$\hbar$-Connection]
Let $(E,\phi)$ be a Higgs pair defined on $C$. 
An $\hbar$-\textbf{connection} on $E$ associated
with the pair $(E,\phi)$ is a 
$\bC$-linear homomorphism
$$
\nabla^\hbar :E\lrar E\tensor \Omega_C^1
$$
subject to the following two conditions:
\begin{equation}
\label{eq:hbar connection}
\nabla^\hbar (f\cdot v) = f\cdot \nabla^\hbar (v)
+  v\tensor (\hbar \;df)
\end{equation}
for $f\in \cO_C$ and $v\in E$, and 
\begin{equation}
\label{eq:hbar = 0}
\phi = \nabla^\hbar \big|_{\hbar=0} .
\end{equation}
For every tangent vector  $X\in T_xC$  at 
$x\in C$, the $\bC$-linear 
$\hbar$-covariant derivative
$$
\nabla_X^\hbar :E\lrar E
$$
is defined by the derivation equation
\begin{equation}
\label{eq:hbar derivation}
\nabla_X^\hbar(f\cdot v) = f\cdot 
\nabla_X^\hbar(v) + \hbar X(f) \cdot v.
\end{equation}
\end{Def}

If $\hbar\ne 0$, then $\frac{1}{\hbar}\nabla^\hbar$
is a holomorphic connection in $E$. Hence $E$ is
flat, and it necessarily has $\deg(E) = 0$.

We consider the variable $\hbar$ as a deformation
parameter. First we extend the base curve $C$
to a formal family 
\begin{equation}
\label{eq:formal}
C[[\hbar]] := \lim_{\substack{\lrar \\ n}} 
C\times \Spec\left(
\bC[\hbar]/(\hbar^n)\right).
\end{equation}
A $\bC[[\hbar]]$-linear
 $\hbar$-connection on a vector bundle $E$
over $C[[\hbar]]$ is defined in the same way 
as above. As a flat connection on a vector bundle
makes the bundle a $D$-module, an $\hbar$-connection
on $C[[\hbar]]$ gives $E$ a $D$-module structure.
Since we do not consider differentiations with respect
to $\hbar$, we call a vector bundle with a
$\bC[[\hbar]]$-linear
 $\hbar$-connection a $D^\hbar$\textbf{-module}.

A $D$-module on a complex manifold $M$
gives rise to a characteristic variety in $T^*M$.
When the $D$-module is holonomic, the
characteristic variety becomes a Lagrangian
in $T^*M$. For our case, 
any $D$-module over a complete algebraic
curve $C$ is holonomic, and defines a
Lagrangian subvariety in $T^*C$. 
These Lagrangians are either the $0$-section 
of the cotangent bundle $T^*C$, or a union of
finite
number of fibers. They satisfy the 
$\bC^*$-invariance with respect to the 
$\bC^*$-action on $T^*C$.
The spectral curves we consider (\ref{eq:family})
are not those Lagrangians as the characteristic 
variety of a $D$-module. They do not 
satisfy the $\bC^*$-invariance. 

The sheaf of $\hbar$-differential 
operators $\cD^\hbar$ on $C[[\hbar]]$ is
constructed by gluing 
\begin{equation}
\label{eq:DU}
\cD^\hbar\big|_{U[[\hbar]]} = 
\cO_{U[[\hbar]]}\left[\hbar \frac{d}{dx}\right],
\end{equation}
where $x$ is a coordinate of an affine
open subscheme $U$ of $C$. 
The \textbf{classical limit}
of a $D^\hbar$-module
 is the mod $\hbar$-reduction,
which simply is an $\cO_C$-module.
The passage between the spectral curves
of Hitchin fibrations and $D$-modules is
not the classical limit, or the characteristic
variety. It is the semi-classical limit,
and it requires the WKB method (see for example,
\cite{BO})
to define.

Let $(E,\nabla^\hbar)$ be a 
 $\bC[[\hbar]]$-linear
 $\hbar$-connection on a vector bundle $E$
over $C[[\hbar]]$. As a $D^\hbar$-module, 
it is easy to show that on an affine 
open $U\subset C$ we have a
differential operator $P(x,\hbar)
\in \cD^\hbar\big|_{U[[\hbar]]}$ such that 
\begin{equation}
\label{eq:D/DP}
E|_{U[[\hbar]]} \isom 
\left.\left(\cD^\hbar\big/
\cD^\hbar P\right)\right|_{U[[\hbar]]}.
\end{equation}
Usually we consider a solution of 
\begin{equation}
\label{eq:PPsi}
P(x,\hbar)\Psi(x,\hbar) = 0
\end{equation}
as an element 
$$
\Psi(x,\hbar)\in 
\Hom\big(E_{U[[\hbar]]},\cO_{U[[\hbar]]}\big).
$$
The WKB method is a mechanism to construct
the solution of (\ref{eq:PPsi}) that \emph{does not}
have a convergent limit as $\hbar\rar 0$,
by the 
\emph{singular perturbation method}
\begin{equation}
\label{eq:WKB}
\Psi(x,\hbar) =\exp\left(\sum_{m=0}^\infty
\hbar^{m-1} S_m(x)\right).
\end{equation}
Here $S_m(x)$ is a holomorphic function
defined on an open subset $U\subset C$,
but has poles at certain points of $C$. 
The parameter $\hbar$ is considered to be  
small, so the $m=0$ contribution
is \emph{singular}. The equation (\ref{eq:PPsi})
is interpreted as 
\begin{equation}
\label{eq:PPsi2}
\bigg(
e^{-\frac{1}{\hbar}S_0(x)}
P(x,\hbar)
e^{\frac{1}{\hbar}S_0(x)}
\bigg)
\exp\left(\sum_{m=1}^\infty
\hbar^{m-1} S_m(x)\right)=0.
\end{equation}
Since 
$$
P(x,\hbar)\in \cO_{U[[\hbar]]}\left[
\hbar\frac{d}{dx}\right],
$$
both the operator and the solution of
(\ref{eq:PPsi2}) are defined
over $U[[\hbar]]$.

\begin{Def}
\label{def:SCL}
Consider an operator $P(x,\hbar)$ 
defined on an open subset $U\subset C$ that is in
the normal ordering expression
\begin{equation}
\label{eq:normal}
P(x,\hbar) = \sum_{k=0}^n
a_k(x,\hbar)\left(\hbar \frac{d}{dx}\right)^{n-k},
\end{equation}
where $a_k(x,\hbar)\in \cO_{U[[\hbar]]}$.
Then we have
\begin{equation}
\label{eq:SCL1}
e^{-\frac{1}{\hbar}S_0(x)}
P(x,\hbar)
e^{\frac{1}{\hbar}S_0(x)}
\bigg|_{\hbar=0}
=
\sum_{k=0}^n
a_k(x,0) \big(S_0'(x)\big)^{n-k},
\end{equation}
where $'$ indicates the $x$-derivative. 
The \textbf{semi-classical limit} of 
the differential equation (\ref{eq:PPsi})
at $\hbar=0$
is the formula (\ref{eq:SCL1}).
If we use an indeterminate 
$y=S_0'(x)$, then the
semi-classical limit is the mod $\hbar$-reduction
\begin{equation}
\label{eq:total symbol}
\sum_{k=0}^n
a_k(x,0) y^{n-k}
\end{equation}
of the \textbf{total symbol}
of the normal ordered operator (\ref{eq:normal}).
\end{Def}

Note that the semi-classical limit 
(\ref{eq:total symbol}) 
is neither 
the \emph{symbol} 
nor the \emph{characteristic variety}
of the operator 
$P(x,\hbar)$. 
The passage from (\ref{eq:total symbol}) to 
(\ref{eq:normal}) is the \textbf{quantization} 
we are discussing in this paper.
In an abstract setting, of course there is 
no way determining a differential operator
from its total symbol (\ref{eq:total symbol})
at $\hbar=0$. 
In the next section we show that a 
$SL(2,\bC)$-Hitchin spectral curve has a 
\emph{unique} quantization.

\section{The WKB approximation
and  quantum curves}
\label{sect:WKB}

We are now ready to state and prove the 
main theorem of this paper.

\begin{thm}
\label{thm:precise}
Let $\cH_C(2,0)_0$ denote the modui stack of
rank $2$ Higgs pairs of degree $0$ vector
bundles with a fixed determinant line bundle, and
consider the $SL(2,\bC)$-Hitchin fibration
\begin{equation}
\label{eq:SL2Hitchin}
\mu_H:\cH_C(2,0)_0\lrar V_{SL}^* :=
H^0\big(C,(\Omega_C^1)^{\tensor 2}\big).
\end{equation}
For a generic spectral data $s\in V_{SL}^*$, 
there is a contractible open neighborhood 
$s\in V\subset V_{SL}^*$ such that 
the \textbf{family} of 
smooth spectral curves 
$$
\widetilde{\Sigma}\big|_V=
\{\Sigma_s\}_{s\in V}
$$ is quantizable via the WKB method.
\end{thm}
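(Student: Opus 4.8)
The plan is to deduce the theorem from Theorem~\ref{thm:Fgn recursion} by matching the \emph{principal specialization} of the free-energy recursion against the WKB hierarchy of Section~\ref{sect:lam}. Throughout one works over the contractible base $V$ with the $s$-independent symplectic basis for $H_1(\Sigma_s,\bZ)$ fixed at the end of Section~\ref{sect:spectral}, so that $B_{\Sigma_s}$, the normalized forms $\omega_s^{a-b}$, and hence every object below depend holomorphically on $s\in V$; the subscript $s$ is suppressed. The first step is to record that (\ref{eq:Fgn recursion}) together with the balanced-average normalization (\ref{eq:Fgn balance}) determines the free energies uniquely, by induction on $2g-2+n$. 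The one point beyond bookkeeping is that the right-hand side of (\ref{eq:Fgn recursion}), a priori only a meromorphic $1$-form in $z_1$, is $d_1$-exact: applying $d_2\cdots d_n$ returns $W_{g,n}$ (this is precisely the computation in the proof of Theorem~\ref{thm:Fgn recursion}), and by Proposition~\ref{prop:Wgn properties} the resulting $1$-form has poles only on $R_s$, no residues there, and, after the $A$-cycle normalization, vanishing periods, so it is $d_1$ of a meromorphic function; (\ref{eq:Fgn balance}) removes the remaining additive constant.

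Next I fix an affine $U\subset C$ containing no caustic of $\pi$ and a branch $z=z(x)$ of $\pi$ over $U$, and set $S_0(x):=F_{0,1}(z(x))=\int^{z(x)}\eta$, so that on $\Sigma_s$ one has $\eta=y\,dx$ with $y=S_0'(x)$ and the spectral curve equation (\ref{eq:r=2}) reads $\bigl(S_0'(x)\bigr)^2+q(x)=0$ for $s=q(x)\,dx^{\otimes 2}$. The genuinely delicate ingredient, flagged in the remarks of Section~\ref{sect:integral}, is the diagonal normalization of $F_{0,2}$: rather than the prescription of \cite{GS}, I \emph{define} $F_{0,2}(z(x),z(x))$ — equivalently the diagonal regularization of $B_{\Sigma_s}$ relative to the local coordinate, together with a correction term — to be the one for which $S_1(x):=\tfrac12 F_{0,2}(z(x),z(x))$ solves the first-order WKB equation associated with a second-order operator of leading symbol $y^2+q$. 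The remainder of the argument is the verification that this choice is compatible with all higher $(g,n)$.

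Then I form the generating function
\[
\Psi_s(z(x),\hbar)=\exp\!\left(\sum_{g\ge 0}\sum_{n\ge 1}\frac{1}{n!}\,\hbar^{2g-2+n}\,F_{g,n}(z(x),\dots,z(x))\right),
\]
equivalently $\Psi_s=\exp\!\bigl(\sum_{m\ge 0}\hbar^{m-1}S_m\bigr)$ with $S_m(x)=\sum_{2g+n-1=m}\tfrac1{n!}F_{g,n}(z(x),\dots,z(x))$, and carry out the central computation: restrict (\ref{eq:Fgn recursion}) to the total diagonal $z_1=\dots=z_n=z(x)$, weight by $\hbar^{2g-2+n}/n!$, and sum over all $(g,n)$ with $2g-2+n\ge 2$. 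Using $d_{u_1}d_{u_2}[\,\cdots\,]\big|_{u_i=z(x)}$ to assemble the product and $F_{g-1,n+1}$ terms into $\bigl(\hbar\frac{d}{dx}\bigr)^2\Psi_s$ (minus its first-order ``square'' part), and evaluating the $\omega_s^{z_j-\sigma(z_j)}(z_1)/\bigl(2\eta\bigr)$ terms on the diagonal so as to produce the first-derivative and potential contributions, one obtains $a_1(x,\hbar),a_0(x,\hbar)\in\cO_{U[[\hbar]]}$, \emph{polynomial in $\hbar$ of bounded degree}, with $a_1(x,0)=0$ and $a_0(x,0)=q(x)$, such that
\[
P_s(x,\hbar):=\left(\hbar\frac{d}{dx}\right)^{2}+a_1(x,\hbar)\,\hbar\frac{d}{dx}+a_0(x,\hbar)
\]
annihilates $\Psi_s(z(x),\hbar)$. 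The mechanism — and the reason a compact base curve is essential — is that in Theorem~\ref{thm:Fgn recursion} every residue integral of (\ref{eq:EO intro}) has been globally evaluated, so (\ref{eq:Fgn recursion}) is a \emph{closed} recursion with no non-local remainder; under the principal specialization the infinite tower of these identities telescopes into the finitely many equations of the WKB hierarchy for a single \emph{order-two} operator, not an operator of infinite order. The $\hbar^0$ equation of the hierarchy is $\bigl(S_0'\bigr)^2+a_0(x,0)=0$; the $\hbar^1$ equation fixes the $\hbar^1$-part of $P_s$ using the $F_{0,2}$ of the previous paragraph; and every equation of order $\hbar^k$, $k\ge 2$, is literally the diagonal specialization of (\ref{eq:Fgn recursion}) summed over the contributing $(g,n)$.

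Finally, because (\ref{eq:Fgn recursion}) is written using only exterior differentiation and contraction it is coordinate-independent (the remark after Theorem~\ref{thm:Fgn recursion}); hence under a chart change $x\mapsto\widetilde x$ the operator $P_s$ transforms as an order-$2$ $\hbar$-differential operator, its coefficients — built from the globally defined $F_{g,n}$ — extend across the caustics, and the local quotients $\cD^\hbar/\cD^\hbar P_s$ glue to a $D^\hbar$-module on $C[[\hbar]]$ that is locally free of $\cO_{C[[\hbar]]}$-rank $2$, i.e. an $\hbar$-connection $\nabla^\hbar$ on a rank-$2$ bundle with $\nabla^\hbar|_{\hbar=0}=\phi$. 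By Definition~\ref{def:SCL} its semi-classical limit is $y^2+a_1(x,0)y+a_0(x,0)=y^2+q(x)$, the characteristic equation (\ref{eq:r=2}) of $\Sigma_s$; and since everything is holomorphic over $V$, this produces a quantization of the whole family $\widetilde\Sigma\big|_V$. The main obstacle is the computation of the third paragraph: proving that the principal specialization of the infinitely many recursions (\ref{eq:Fgn recursion}) collapses to one equation of differential order two with $\hbar$-polynomial coefficients, and that the subtle diagonal normalization of $F_{0,2}$ is exactly the one making the $\hbar^1$ equation — hence the whole hierarchy — consistent. The remaining steps are essentially bookkeeping once Theorem~\ref{thm:Fgn recursion} is available.
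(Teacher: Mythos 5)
Your overall strategy is the same as the paper's (principal specialization of the recursion of Theorem~\ref{thm:Fgn recursion}, the special choice of $S_1$ via the first-order WKB/consistency condition replacing a canonical $F_{0,2}$, assembly into a second-order $\hbar$-operator whose semi-classical limit is $\eta^{\tensor 2}+\pi^*s=0$, all carried out holomorphically over the contractible $V$). But the step you yourself flag as ``the main obstacle'' --- that the diagonal specialization of the infinitely many identities (\ref{eq:Fgn recursion}) collapses into the WKB hierarchy of a single order-two operator --- is exactly the technical content of the paper's proof, and you have asserted it rather than proved it. Concretely, two verifications are missing. First, the principal specialization via (\ref{eq:dfdt}) produces the mixed-derivative terms $\frac{(n-1)(n-2)}{2h(z)}\partial_{u_1}\partial_{u_2}F_{g,n-1}(u_1,u_2,z,\dots,z)\big|_{u_1=u_2=z}$, which do not belong to any WKB hierarchy; the paper shows (passing through (\ref{eq:ps1})) that after applying $\sum_{2g-2+n=m}\frac{1}{(n-1)!}$ these cancel exactly against the specialized $F_{g-1,n+1}$ cross-derivative terms, which is what yields the closed recursion (\ref{eq:Sm recursion}) for the $S_m$. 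Your phrase ``telescopes into the finitely many equations of the WKB hierarchy'' glosses over precisely this cancellation, without which no second-order equation emerges at all.

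Second, even granting a closed recursion for the $S_m$, you leave the operator as $\left(\hbar\frac{d}{dx}\right)^2+a_1(x,\hbar)\,\hbar\frac{d}{dx}+a_0(x,\hbar)$ with unknown coefficients ``polynomial in $\hbar$ of bounded degree'' and only the correct values at $\hbar=0$. Existence of such coefficients is not automatic from having a graded family of identities; it must be checked that the $\hbar^{m+1}$-coefficient of $P_s\Psi_s=0$ for one fixed candidate operator reproduces the $S_m$-recursion for every $m$. This is what the paper's closing Lemma does: using $\frac{d}{dx}=\frac{y}{h(z)}\frac{d}{dz}$, the semi-classical relation (\ref{eq:S0}), and the consistency condition (\ref{eq:consistency}) defining $S_1$, it verifies that (\ref{eq:Sm recursion}) coincides with (\ref{eq:Sch3}) for the specific, $\hbar$-independent operator $P_s(x,\hbar)=\hbar^2\left(\frac{d}{dx}\right)^2+s_2(x)$ --- i.e.\ the quantization carries no quantum corrections, $a_1\equiv 0$ and $a_0\equiv s_2(x)$, and the $dS_{m+1}$ term is absorbed into the $a=0,b=0$ split of the quadratic term. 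Without these two computations your argument is a correct plan whose decisive step is missing; the surrounding material (uniqueness of the $F_{g,n}$ from (\ref{eq:Fgn balance}), coordinate independence, gluing of $\cD^\hbar/\cD^\hbar P_s$ over the family) is fine and matches the paper.
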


\begin{rem}
The most involved technical part of this paper
is the reduction of the differential
recursion (\ref{eq:Fgn recursion}) into
an ordinary differential equation via
the \emph{principal specialization}
\begin{equation}
\label{eq:principal}
z_1=z_2=\cdots=z_n=z.
\end{equation}
We note that for the case of simple and
double Hurwitz numbers and related 
topics discussed in \cite{BHLM,  MSS, MS, Zhou4},
the principal specialization corresponds
to the reduction of a summation over all
Young diagrams (or partitions) into 
a sum over $1$-row Young diagrams. 
Thus the formulas dramatically 
simplify, and this is the key to constructing
the quantum curves. 
For the case of Hitchin fibrations we do not
 have an interpretation as a sum over partitions,
 and the process of principal specialization
 becomes technically more difficult. 
 \end{rem}
 
 \begin{rem}
 The $s\in V$ dependence does not 
 pose any difficulty, because the only 
 consideration we need is the consistent
 integration we have taken care of in 
 Section~\ref{sect:integral} for the choice 
 of the subset $V$ with a consistent 
 symplectic basis for $H_1(\Sigma_s,\bZ)$. 
 The calculations in this section are thus
 all carried out over this family.
 \end{rem}

We first recall a trivial lemma from \cite{MS}:

\begin{lem}
Let $f(z_1,\dots,z_n)$ be a symmetric function
in $n$ variables. 
Then
\begin{equation}
\begin{aligned}
\label{eq:dfdt}
\frac{d}{dz}f(z,z,\dots,z) 
&= n
\left.
\left[
\frac{\partial}{\partial u}f(u,z,\dots,z)
\right]
\right|_{u=z};
\\
\frac{d^2}{dz^2}f(z,z,\dots,z) 
&=
n\left.\left[
\frac{\partial^2}{\partial u ^2} 
f(u,z,\dots,z)
\right]\right|_{u=z}
\\
&\qquad
+
n(n-1)
\left.\left[
\frac{\partial^2}{\partial u_1 \partial u_2} 
f(u_1,u_2,z,\dots,z)
\right]\right|_{u_1=u_2=z}.
\end{aligned}
\end{equation}
For a function in one variable 
$f(z)$,
we have
\begin{equation}
\label{eq:lhopital}
\lim_{z_2\rar z_1}
\left[
\omega^{z_2-b}(z_1)\big(
f(z_1)-f(z_2)
\big)
\right]
=d_1f(z_1),
\end{equation}
where $\omega^{z_2-b}(z_1)$ is the $1$-form
of {\rm{(\ref{eq:B and omega})}}.
\end{lem}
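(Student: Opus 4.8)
The plan is to treat the two assertions separately, since they are logically independent: the first is a chain-rule identity for the \emph{principal specialization} of a symmetric function, and the second is a l'Hôpital-type evaluation of the Cauchy-type kernel $\omega^{z_2-b}(z_1)$ of (\ref{eq:B and omega}) near its simple pole. Both are elementary, and the whole content is bookkeeping with symmetry in the first part and with the residue normalization of $\omega$ in the second.

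For the first identity I would set $g(z)=f(z,z,\dots,z)$ and differentiate by the multivariable chain rule, obtaining $g'(z)=\sum_{i=1}^n (\partial f/\partial z_i)(z,\dots,z)$. Because $f$ is symmetric, all of these first partials agree when restricted to the diagonal $z_1=\cdots=z_n$, so the sum collapses to $n\,[\partial_u f(u,z,\dots,z)]_{u=z}$, which is the first line of (\ref{eq:dfdt}). For the second derivative I would differentiate once more to get $g''(z)=\sum_{i,j=1}^n (\partial^2 f/\partial z_i\partial z_j)(z,\dots,z)$ and then split the double sum into the $n$ diagonal terms $i=j$ and the $n(n-1)$ off-diagonal terms $i\neq j$. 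Symmetry makes each diagonal term equal to $[\partial_u^2 f(u,z,\dots,z)]_{u=z}$ and each off-diagonal term equal to $[\partial_{u_1}\partial_{u_2} f(u_1,u_2,z,\dots,z)]_{u_1=u_2=z}$, yielding the second line of (\ref{eq:dfdt}). The only point requiring care is to count the off-diagonal terms as $n(n-1)$ ordered pairs rather than $\binom{n}{2}$, since both $(i,j)$ and $(j,i)$ occur.

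For the limit (\ref{eq:lhopital}) I would work in a local coordinate $t$ with $z_1\leftrightarrow t_1$ and $z_2\leftrightarrow t_2$ near a common point. From the defining properties of $\omega^{z_2-b}(z_1)$ recalled before (\ref{eq:B and omega})---a simple pole of residue $1$ at $z_1=z_2$---we have the local expansion $\omega^{z_2-b}(z_1)=\big(\tfrac{1}{t_1-t_2}+O(1)\big)\,dt_1$ as $t_2\to t_1$, with holomorphic remainder. Taylor expanding $f(z_1)-f(z_2)=(t_1-t_2)f'(t_1)+O\big((t_1-t_2)^2\big)$ and multiplying, the simple pole cancels the simple zero, and letting $z_2\to z_1$ gives $f'(t_1)\,dt_1=d_1 f(z_1)$; the $O(1)$ part of $\omega$ contributes $O(t_1-t_2)\to 0$ and drops out.

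Since the statement is essentially a calculus exercise, there is no substantive obstacle. The only things worth verifying are that the limit in (\ref{eq:lhopital}) is independent of the auxiliary pole $b$ and of the chosen local coordinate---both immediate, because the residue-$1$ normalization at $z_1=z_2$ pins down the singular part while the regular part is annihilated in the limit---and that the symmetry argument for the second derivative correctly distinguishes ordered from unordered index pairs.
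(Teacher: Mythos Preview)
Your proposal is correct. The paper does not actually prove this lemma; it is merely stated and attributed to \cite{MS} as ``a trivial lemma,'' so there is no proof in the paper to compare against. Your argument---chain rule plus symmetry for (\ref{eq:dfdt}), and the residue-$1$ local expansion of $\omega^{z_2-b}(z_1)$ against the first-order Taylor expansion of $f(z_1)-f(z_2)$ for (\ref{eq:lhopital})---is the standard and expected one, and your remarks on the ordered versus unordered count for the off-diagonal terms and on the irrelevance of $b$ and the local coordinate are exactly the points worth noting.
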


The rest of the section is devoted to proving
Theorem~\ref{thm:precise}.

\begin{proof}[Proof of Theorem~\ref{thm:precise}]
For the purpose of calculation, let us choose one
of the ramification points $p\in R$ of
the covering $\pi:\Sigma_s\lrar C$ for a generic 
spectral data $s = s_2\in 
H^0\big(C,(\Omega_C^1)^{\tensor 2}\big)$,
and assume that all points $z_1,\dots,z_n$
are close to $p$, but not quite equal. 
As a consequence, their Galois conjugates
$\sigma(z_j)$'s are also close to $p$. 
On a neighborhood we choose a local coordinate $z$
around $p$ such that $z=0$ defines $p$ and
that $\sigma(z) = -z$. We use
the local expressions (\ref{eq:local omega}),
(\ref{eq:local B}), (\ref{eq:local eta}),
and the relations (\ref{eq:BK involution}).
Using the notation $\partial_z = \partial/\partial z$,
we have a local formula equivalent to 
(\ref{eq:Fgn recursion}) that is valid
for $2g-2+n\ge 2$:
\begin{multline}
\label{eq:Fgn local}
\partial_{z_1} F_{g,n}(z_1,\dots,z_n)
\\
= -\sum_{j=2}^n 
\left[
\frac{\omega_s^{z_j-\sigma(z_j)}
(z_1)}{2h(z_1)dz_1}\cdot 
\partial_{z_1}F_{g,n-1}\big(z_{[\hat{j}]}\big)
-
\frac{\omega_s^{z_j-\sigma(z_j)}
(z_1)}{dz_1\cdot 2h(z_j)}\cdot 
\partial_{z_j}F_{g,n-1}\big(z_{[\hat{1}]}\big)
\right]
\\
-\frac{1}{2h(z_1)}
\frac{\partial^2}{\partial u_1\partial u_2}
\left.
\left[F_{g-1,n+1}
\big(u_1,u_2,z_{[\hat{1}]}\big)
+\sum_{\substack{g_1+g_2=g\\
I\sqcup J=[\hat{1}]}}^{\text{stable}}
F_{g_1,|I|+1}(u_1,z_I)F_{g_2,|J|+1}(u_2,z_J)
\right]
\right|_{\substack{u_1=z_1\\u_2=z_1}}.
\end{multline}
Let us apply (\ref{eq:principal}).
The left-hand side becomes $\frac{1}{n}
\partial_z F_{g,n}(z,\dots,z)$.
To calculate the contributions from 
the first line of the right-hand side of 
(\ref{eq:Fgn local}),
we choose $j>1$ and set $z_i=z$ for all $i$
except for $i=1,j$. Then take the limit $z_j\rar z_1$. 
In this procedure, we note that the contributions
from the simple pole 
of $\omega_s^{z_j-\sigma(z_j)}(z_1)$
at $z_1 = \sigma(z_j)$ cancel at $z_1=z_j$.
Thus we obtain
\begin{multline*}
\left.
-\sum_{j=2}^n \frac{1}{z_1-z_j}
\left(
\frac{1}{2h(z_1)}\partial_{z_1}F_{g,n-1}
(z_1,z,\dots,z)
-\frac{1}{2h(z_j)}\partial_{z_j}F_{g,n-1}
(z_j,z,\dots,z)
\right)
\right|_{z_1=z_j}
\\
=
-\sum_{j=2}^n
\partial_{z_1}
\left(
\frac{1}{2h(z_1)}\partial_{z_1}F_{g,n-1}
(z_1,z,\dots,z)
\right)
\\
=
-(n-1)\partial_{z_1}
\left(
\frac{1}{2h(z_1)}\partial_{z_1}F_{g,n-1}
(z_1,z,\dots,z)
\right)
\\
=
-(n-1)\partial_{z_1}
\left(
\frac{1}{2h(z_1)}
\right)\partial_{z_1}F_{g,n-1}
(z_1,z,\dots,z)
-
\frac{n-1}{2h(z_1)}
\partial_{z_1}^2F_{g,n-1}
(z_1,z,\dots,z).
\end{multline*}
The limit $z_1\rar z$ then produces
\begin{multline}
\label{eq:unstable}
-\partial_z \frac{1}{2h(z)}\cdot
\partial_z F_{g,n-1}(z,\dots,z)
-
\frac{1}{2h(z)}\partial_z^2 F_{g,n-1}(z\dots,z)
\\
+
\left.
\frac{(n-1)(n-2)}{2h(z)}\frac{\partial^2}
{\partial u_1\partial u_2}
F_{g,n-1}(u_1,u_2,z\dots,z)
\right|_{u_1=u_2=z}.
\end{multline}
To calculate the principal specialization of
the second line of the right-hand side
of (\ref{eq:Fgn local}),
we note that since all points $z_i$'s for $i\ge 2$
are set to be equal,  
a  set partition by index sets $I$ and $J$ 
becomes a partition of $n-1$ with a combinatorial 
factor that counts the redundancy.
The result is
\begin{multline}
\label{eq:stable}
-\left.
\frac{1}{2h(z)}\frac{\partial^2}
{\partial u_1\partial u_2}
F_{g-1,n+1}(u_1,u_2,z\dots,z)
\right|_{u_1=u_2=z}
\\
-\frac{1}{2h(z)}
\sum_{\substack{g_1+g_2=g\\
n_1+n_2=n-1}}^{\text{stable}}
\partial_z
F_{g_1,n_1+1}(z,\dots,z)\cdot
\partial_zF_{g_2,n_2+1}(z_,\dots,z).
\end{multline}
Assembling (\ref{eq:unstable}) and (\ref{eq:stable})
together, we obtain
\begin{multline}
\label{eq:ps1}
\frac{1}{2h(z)}
\left[
\partial_z^2 F_{g,n-1}(z\dots,z)
+
\sum_{\substack{g_1+g_2=g\\
n_1+n_2=n-1}}^{\text{stable}}
\partial_z
F_{g_1,n_1+1}(z,\dots,z)\cdot
\partial_zF_{g_2,n_2+1}(z_,\dots,z)
\right]
\\
+
\frac{1}{n}\partial_zF_{g,n}(z,\dots,z)
+
\partial_z \frac{1}{2h(z)}\cdot
\partial_z F_{g,n-1}(z,\dots,z)
\\
=
\left.
\frac{(n-1)(n-2)}{2h(z)}\frac{\partial^2}
{\partial u_1\partial u_2}
F_{g,n-1}(u_1,u_2,z\dots,z)
\right|_{u_1=u_2=z}
\\
-\left.
\frac{1}{2h(z)}\frac{\partial^2}
{\partial u_1\partial u_2}
F_{g-1,n+1}(u_1,u_2,z\dots,z)
\right|_{u_1=u_2=z}.
\end{multline}

Following the construction of the quantum
curves of \cite{GS, MS}, we now 
apply the operation
$\sum_{2g-2+n=m}\frac{1}{(n-1)!}$ to 
(\ref{eq:ps1}) above, and write the result in
terms of 
\begin{equation}
\label{eq:Sm}
S_m(z) := \sum_{2g-2+n=m-1}\frac{1}{n!}
F_{g,n}(z,\dots,z),
\end{equation}
to fit into the WKB formalism.
For $m\ge 2$, $S_m(z)$ is a meromorphic 
function on $\Sigma_s$ with a pole at each
ramification point (Lagrangian singularity)
$p\in R_s$ of order $3m-3$. This can be easily
seen by the fact that 
$F_{g,n}(z,\dots,z)$ has
a pole  of order $6g-6+3n$ 
at each $p\in R_s$. And this fact
follows by induction from the 
integral recursion (\ref{eq:Fgn recursion})
on $F_{g,n}$,
and the initial conditions (\ref{eq:W11})
and (\ref{eq:W03}).

Our first remark is that summing over
 all possibilities
of $(g,n)$ with the fixed value of $2g-2+n$, 
the right-hand side of (\ref{eq:ps1}) becomes $0$.
Thus we have established

\begin{thm}
The functions $S_m(z)$ of {\rm{(\ref{eq:Sm})}}
for $m\ge 2$
satisfy the recursion formula
\begin{equation}
\label{eq:Sm recursion}
\frac{1}{2h(z)}
\left(
\frac{d^2S_m}{dz^2}+
\sum_{\substack{a+b=m+1\\a,b\ge 2}}
\frac{dS_a}{dz} \;\frac{dS_b}{dz}
\right)
+
\frac{dS_{m+1}}{dz}
+
\frac{d}{dz}
\left(\frac{1}{2h(z)}\right) \frac{dS_m}{dz}
=0.
\end{equation}
It can also be written as a coordinate-free manner
as an equation for meromorphic $1$-forms on 
$\Sigma_s$:
\begin{equation}
\label{eq:Sm coordinate free}
dS_{m+1}+
\frac{1}{2\eta}
\sum_{\substack{a+b=m+1\\a,b\ge 2}}
dS_a\cdot dS_b +
d\left(\frac{1}{2\eta}dS_m\right)= 0,
\end{equation}
where $1/\eta$ is again the contraction operator
with respect to the $1$-form $\eta$.
\end{thm}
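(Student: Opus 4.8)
The statement is obtained by forming a particular weighted sum of the relation \eqref{eq:ps1}, which --- as an intermediate step already completed in the proof of Theorem~\ref{thm:precise} --- holds for every $(g,n)$ with $2g-2+n\ge 2$. The plan is to apply the operator $\sum_{2g-2+n=m}\frac{1}{(n-1)!}$ to both sides of \eqref{eq:ps1}. For each fixed $m$ this is a finite sum, since only finitely many stable pairs $(g,n)$ satisfy $2g-2+n=m$, so no convergence question arises. It then remains to check two things: that the right-hand side of \eqref{eq:ps1} cancels identically under this operation, and that the left-hand side reorganizes into \eqref{eq:Sm recursion}.

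First I would treat the right-hand side. Its two terms carry $F_{g,n-1}$ and $F_{g-1,n+1}$, both of which have $2g-2+n=m-1$ (the topological type recorded by $S_m$). Reindexing by $n'=n-1$ in the first term and by $(g',n')=(g-1,n+1)$ in the second, the weights $\frac{(n-1)(n-2)}{(n-1)!}$ and $\frac{1}{(n-1)!}$ both collapse to $\frac{1}{(n'-2)!}$, and in both cases the index set is exactly $\{(g,n')\colon 2g-2+n'=m-1,\ n'\ge 2\}$. Hence the two contributions are one and the same sum carried with opposite signs, and cancel. I would emphasize here that the hypothesis $m\ge 2$ is precisely what keeps the unstable type $(0,2)$ --- whose $F_{0,2}$ is not normalized by \eqref{eq:Fgn balance} and is dealt with separately in Section~\ref{sect:WKB} --- out of all of these sums.

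Next, the left-hand side. Reindexing $n'=n-1$ turns the term $\sum\frac{1}{(n-1)!}\,\frac{1}{2h}\frac{d^2}{dz^2}F_{g,n-1}$ into $\frac{1}{2h}\frac{d^2S_m}{dz^2}$ and the term $\sum\frac{1}{(n-1)!}\,\frac{d}{dz}\!\big(\tfrac{1}{2h}\big)\frac{d}{dz}F_{g,n-1}$ into $\frac{d}{dz}\!\big(\tfrac{1}{2h}\big)\frac{dS_m}{dz}$, while $\sum\frac{1}{(n-1)!}\cdot\frac1n\frac{d}{dz}F_{g,n}$ becomes $\frac{dS_{m+1}}{dz}$ because $2g-2+n=m$ is the type of $S_{m+1}$. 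The only term requiring genuine bookkeeping is the splitting sum: combining the weight $\frac{1}{(n-1)!}$, the redistribution factor produced when the set-partition sum $\sum_{I\sqcup J=[\hat{1}]}$ is specialized to $z_2=\dots=z_n=z$, and the chain-rule factors supplied by \eqref{eq:dfdt} collapses it to $\frac{1}{2h}\sum_{a+b=m+1,\ a,b\ge 2}\frac{dS_a}{dz}\frac{dS_b}{dz}$, the constraint $a,b\ge 2$ being exactly the translation of the stability conditions $2g_i-1+n_i>0$ (so that no $S_0$ or $S_1$ enters). Assembling the four pieces and using the vanishing of the right-hand side gives \eqref{eq:Sm recursion}. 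For the coordinate-free version \eqref{eq:Sm coordinate free} I would multiply through by $dz$, use $\eta=h(z)\,dz$ so that $\tfrac{1}{2h}$ becomes the contraction $\tfrac{1}{2\eta}$ and $\frac{1}{2h}\frac{d^2S_m}{dz^2}\,dz+\frac{d}{dz}\!\big(\tfrac{1}{2h}\big)\frac{dS_m}{dz}\,dz=d\big(\tfrac{1}{2\eta}dS_m\big)$; both sides of the resulting identity are meromorphic $1$-forms on $\Sigma_s$ --- with poles only along $R_s$ (of order $\le 3m-3$, by induction on $(g,n)$ from \eqref{eq:Fgn recursion} with base cases \eqref{eq:W11} and \eqref{eq:W03}) --- so the equality verified on a neighborhood of a ramification point propagates to all of $\Sigma_s$ by the identity theorem.

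The main obstacle is this combinatorial collapse of the splitting term. One must verify that the three independent sources of factorials --- the weight $\frac{1}{(n-1)!}$, the binomial coefficient arising from distributing the coincident variables $z_2=\dots=z_n=z$ among the index sets $I$ and $J$, and the factors $\frac{1}{n_i+1}$ needed to convert a derivative in a single slot into a total derivative via \eqref{eq:dfdt} --- conspire to produce exactly the symmetric convolution $\sum_{a+b=m+1}\frac{dS_a}{dz}\frac{dS_b}{dz}$ with each diagonal term counted once, and that the analogous manipulation of $\frac{d^2}{dz^2}F_{g,n-1}$ remains consistent with the $\partial_{u_1}\partial_{u_2}$ term on the right of \eqref{eq:ps1}. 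Concretely this amounts to scrupulously distinguishing, at each step, a derivative in one factor of $\Sigma_s^n$ from a total derivative taken after the principal specialization --- already the delicate bookkeeping underlying the passage from \eqref{eq:Fgn local} to \eqref{eq:ps1}.
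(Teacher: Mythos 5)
Your proposal is correct and follows essentially the same route as the paper: apply $\sum_{2g-2+n=m}\frac{1}{(n-1)!}$ to \eqref{eq:ps1}, observe that the two second-mixed-derivative terms on the right reindex to the same sum with opposite signs and cancel, reorganize the left-hand side into $S_m$, $S_{m+1}$ and the convolution $\sum_{a+b=m+1}dS_a\,dS_b$, and multiply by $dz$ with $\eta=h\,dz$ for \eqref{eq:Sm coordinate free}. In fact you supply more of the bookkeeping (the $\tfrac{1}{(n'-2)!}$ reindexing and the factorial collapse $\tfrac{1}{(n-1)!}\binom{n-1}{n_1}\tfrac{1}{(n_1+1)(n_2+1)}=\tfrac{1}{(n_1+1)!\,(n_2+1)!}$) than the paper, which simply asserts the cancellation.
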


Recall the local geometry of
the spectral curve
$$
p\in \Sigma_s\subset T^*C,
$$
and that $p\in C$ is also on the $0$-section 
of the cotangent bundle $T^*C$.  
We trivialize the cotangent bundle near $x=p$,
where $x$ is a local coordinate on $C$, and
let $y$ be the fiber coordinate of $T^*_xC$. 
The relation between $(x,y)\in T^*C$ and
the local coordinate $z$ of $\Sigma_s$ around 
$p\in \Sigma_s$ is given by the formula
\begin{equation}
\label{eq:eta = ydx}
\eta = h(z)dz = ydx.
\end{equation}
Let the local expression of the spectral data $s=s_2$ 
be $s_2 = s_2(x)(dx)^2$. 
Then the equation for the spectral curve $\Sigma_s$
near $p\in \Sigma_s$ is given by
\begin{equation}
\label{eq:local Cs}
y^2+s_2(x) = 0.
\end{equation}
The local expression of the quantum curve,
which is an $\hbar$-differential operator,
becomes 
\begin{equation}
\label{eq:local qc}
P(x,\hbar):=\hbar^2\left(\frac{d}{dx}\right)^2 +s_2(x).
\end{equation}
Following the method of
Berg\`ere-Eynard \cite{EB} and the WKB formalism
 of Gukov-Su\l kowski 
\cite{GS}, we
define
\begin{align}
\label{eq:F}
F(z,\hbar) &= \sum_{m=0}^\infty
\hbar^{m-1} S_m (z)
=\sum_{g\ge 0}\sum_{n\ge 1}
\hbar^{2g-2+n}\frac{1}{n!} F_{g,n}(z,\dots,z),
\\
\label{eq:Psi}
\Psi(z,\hbar) &= \exp F(z,\hbar).
\end{align}
The truncated summation for $m\ge 2$ in
(\ref{eq:F}), and the corresponding
portion of  (\ref{eq:Psi}), are  functions on 
$C[[\hbar]]$ with essential singularities
at each Lagrangian singularity of the 
spectral curve $\pi:\Sigma_s\lrar C$.
The factor $e^{\frac{1}{\hbar}S_0}$
in $\Psi$ plays the role of determining the 
semi-classical limit, as explained in 
Section~{\ref{sect:lam}}.

Using (\ref{eq:eta = ydx}) we identify the
derivation 
\begin{equation}
\label{eq:d/dx}
\frac{d}{dx} = \frac{y}{h(z)}\;\frac{d}{dz},
\end{equation}
which comes from the push-forward $\pi_*(d/dz)$.
The transformation (\ref{eq:d/dx}) is
singular at every ramification point.
The Schr\"odinger equation is calculated as
\begin{align}
\label{eq:Sch1}
&P(x,\hbar)\Psi(z,\hbar) = 0
\\
\label{eq:Sch2}
\Longleftrightarrow\quad
&\hbar^2\left(
\frac{d^2F}{dx^2}+\frac{dF}{dx}\cdot
\frac{dF}{dx}\right) + s_2(x) = 0
\\
\label{eq:Sch3}
\Longleftrightarrow\quad
&\sum_{m=0}^\infty
\hbar^{m+1}\frac{d^2S_m}{dx^2}
+
\sum_{a,b\ge 0}\hbar^{a+b}
\frac{dS_a}{dx}\cdot \frac{dS_b}{dx}
+s_2(x) = 0.
\end{align}
Collecting the coefficient of the $\hbar^0$ terms
in (\ref{eq:Sch3}), we obtain the
semi-classical limit
\begin{equation}
\label{eq:SCL}
\left(\frac{dS_0}{dx}\right)^2 + s_2(x) = 0.
\end{equation}
From (\ref{eq:local Cs}) and (\ref{eq:SCL}) we
conclude that
\begin{equation}
\label{eq:S0}
\frac{dS_0}{dx} = y = \sqrt{-s_2(x)}.
\end{equation}
This is consistent with our choice of $W_{0,1}$
of the Eynard-Orantin theory 
(\ref{eq:W01}):
$$
dS_0 = dF_{0,1} = W_{0,1} = \eta = ydx.
$$
Moreover, if we allow terms $a=0$ or $b=0$ in
(\ref{eq:Sm recursion}), then what we have in 
addition is
$$
\frac{1}{2h(z)} 2\frac{dS_0}{dz}
\frac{dS_{m+1}}{dz} = \frac{1}{h(z)} 
\frac{h(z)}{y} \frac{dS_0}{dx}\frac{dS_{m+1}}{dz}
=\frac{dS_{m+1}}{dz}.
$$
In other words, the $\frac{dS_{m+1}}{dz}$ term 
already there in (\ref{eq:Sm recursion}) is absorbed 
in the split differentiation for $a=0$ and $b=0$.

Here we comment that $S_0 = \int \eta$
is not a function on $\Sigma_s$. Since $\eta$ is a 
holomorphic $1$-form on $\Sigma_s$, 
its integral is defined only on the
universal covering of $\Sigma_s$. From
 (\ref{eq:SCL}), we calculate the conjugated 
operator    (\ref{eq:PPsi2})
\begin{equation}
\label{eq:P0}
e^{-\frac{1}{\hbar}S_0}P(x,\hbar)
e^{\frac{1}{\hbar}S_0}
=\hbar^2\frac{d^2}{dx^2} +2 \hbar \frac{dS_0}{dx}
\frac{d}{dx} + \hbar \frac{d^2S_0}{dx^2}.
\end{equation}

The $\hbar^1$ terms of (\ref{eq:Sch3})
give what we call the \textbf{consistency
condition}
\begin{equation}
\label{eq:consistency}
\frac{d^2S_0}{dx^2} + 2\frac{dS_0}{dx}\cdot
\frac{dS_1}{dx} = 0,
\end{equation}
which also follows from (\ref{eq:P0}).
We recall that until now we have never 
\emph{defined} what we want to use
as $F_{0,2}(z_1,z_2)$.
The defining equation $d_1d_2F_{0,2} = W_{0,2}$
alone does not determine $F_{0,2}$ because
we can add terms 
$$
F_{0,2}(z_1,z_2) + f(z_1)+f(z_2)
$$ 
using an arbitrary function $f(z)$. The principal
specialization then becomes $F_{0,2}(z,z) + 2f(z)$,
which makes
$$
S_1 = \half F_{0,2}(z,z) + f(z).
$$
This situation allows us to \emph{define}
the quantity $S_1$ by a solution of the
consistency condition (\ref{eq:consistency}).
Thus we define, 
\begin{equation}
\label{eq:S1}
S_1 = \int^x\frac{dS_1}{dx}dx = -\half 
\log \frac{dS_0}{dx}.
\end{equation}
This makes
\begin{equation}
\label{eq:e^S1}
e^{S_1} = \frac{1}{\sqrt{y}}.
\end{equation}

\begin{rem}
We note that the choice we need to make for
$S_1$, the formula given in (\ref{eq:S1}),
is different from the choice of the
\emph{torsion} term of \cite{GS}. 
\end{rem}

More importantly for our purpose,
we read off from (\ref{eq:consistency})
that
\begin{equation}
\label{eq:S1'}
\frac{dS_1}{dx} =-
\half  \frac{\frac{d}{dx}\sqrt{-s_2(x)}}
{\sqrt{-s_2(x)}}.
\end{equation}
Note that $s_2(x)$ has a simple zero at
each branch point $p\in C$. If $x$ is chosen
as a local coordinate centered at $p$, then
(\ref{eq:S1'}) is a meromorphic
function with a simple pole at $p$.
The conjugation of (\ref{eq:P0})
by $e^{S_1}$ is calculated as
\begin{equation}
\label{eq:P1}
e^{-S_1}
e^{-\frac{1}{\hbar}S_0}P(x,\hbar)
e^{\frac{1}{\hbar}S_0} 
e^{S_1}
=\hbar^2\frac{d^2}{dx^2} 
+2 \left(\hbar \frac{dS_1}{dx}
+  \frac{dS_0}{dx}
\right)\hbar \frac{d}{dx} 
\in \cD^\hbar (U),
\end{equation}
where $U\subset C$ is an open subset that does
not contain any branch point of the covering
$\pi$.

Finally we have

\begin{lem} The consistency condition
{\rm{(\ref{eq:consistency})}} makes
{\rm{(\ref{eq:Sm recursion})}} and
{\rm{(\ref{eq:Sch3})}} equivalent
on any open subset $U\subset C$ that
is away from the caustics.
\end{lem}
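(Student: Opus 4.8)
The plan is to expand the Schr\"odinger equation $P(x,\hbar)\Psi=0$ as in \eqref{eq:Sch3} and match it to \eqref{eq:Sm recursion} order by order in $\hbar$. Collecting the coefficient of $\hbar^{0}$ in \eqref{eq:Sch3} gives $(dS_0/dx)^2+s_2(x)=0$, which is \eqref{eq:SCL} and holds by the construction of $S_0$ (since $dS_0=W_{0,1}=\eta=y\,dx$ and $y^2=-s_2(x)$); the coefficient of $\hbar^{1}$ is literally the consistency condition \eqref{eq:consistency}; and for each $m\ge 1$ the coefficient of $\hbar^{m+1}$ is
\begin{equation*}
\frac{d^2S_m}{dx^2}+\sum_{\substack{a+b=m+1\\ a,b\ge 0}}\frac{dS_a}{dx}\,\frac{dS_b}{dx}=0.
\tag{$\star_m$}
\end{equation*}
So \eqref{eq:Sch3} is equivalent to $\{(\star_m):m\ge 1\}$ together with \eqref{eq:SCL} and \eqref{eq:consistency}; the first is automatic and the second is the hypothesis, so the core of the lemma is to show that, for $m\ge 2$, the equation $(\star_m)$ coincides with \eqref{eq:Sm recursion} once \eqref{eq:consistency} is imposed.

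For the core step I would first transport $(\star_m)$ to the coordinate $z$ on $\Sigma_s$ via $\frac{d}{dx}=\frac{y}{h(z)}\frac{d}{dz}$ from \eqref{eq:d/dx}; carrying this through the second derivative contributes an extra first-order term, so after multiplying by the nowhere-vanishing factor $h^2/y^2$ one obtains
\begin{equation*}
\frac{d^2S_m}{dz^2}+\frac{d}{dz}\log\!\left(\frac{y}{h}\right)\frac{dS_m}{dz}+\sum_{\substack{a+b=m+1\\ a,b\ge 0}}\frac{dS_a}{dz}\,\frac{dS_b}{dz}=0.
\end{equation*}
I would then rewrite \eqref{eq:Sm recursion} using $dS_0=\eta=h(z)\,dz$ to absorb its stand-alone term $\frac{dS_{m+1}}{dz}=\frac{1}{2h}\cdot 2\,\frac{dS_0}{dz}\frac{dS_{m+1}}{dz}$ into the quadratic sum (the split-differentiation remark preceding the lemma), which enlarges that sum from $a,b\ge 2$ to all $a,b\ge 0$ with $a,b\ne 1$; multiplying by $2h$ and using $2h\,\frac{d}{dz}\bigl(\tfrac{1}{2h}\bigr)=-\frac{d}{dz}\log h$ puts \eqref{eq:Sm recursion} in a form directly comparable to the previous display. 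Subtracting, the only leftover is $\bigl(\frac{d}{dz}\log y+2\,\frac{dS_1}{dz}\bigr)\frac{dS_m}{dz}$: the $\frac{d}{dz}\log y$ comes from $\frac{d}{dz}\log(y/h)+\frac{d}{dz}\log h$, and the $2\,\frac{dS_1}{dz}$ from the surplus pairs $(1,m),(m,1)$ of the enlarged quadratic sum. This leftover vanishes identically, since \eqref{eq:consistency} in coordinate-free form reads $2\,dS_1=-d\log(dS_0/dx)=-d\log y$ (equivalently $e^{S_1}=y^{-1/2}$, \eqref{eq:e^S1}), so $2\,\frac{dS_1}{dz}=-\frac{d}{dz}\log y$. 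Hence $(\star_m)$ and \eqref{eq:Sm recursion} are the same equation for every $m\ge 2$; and because $y=\sqrt{-s_2(x)}\ne 0$ and $h\ne 0$ on $U$ away from the caustics (the $1$-form $\eta=h\,dz$ vanishes precisely on $R_s=\pi^{-1}(\text{caustics})$), the substitution \eqref{eq:d/dx} is invertible there and the multipliers $2h$, $h^2/y^2$ are units, so the equivalence runs both ways.

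The step I expect to need the most care is the bookkeeping across this change of variable together with the index ranges: one must correctly produce the first-order term $\frac{d}{dz}\log(y/h)\cdot\frac{dS_m}{dz}$ from $d^2/dx^2$, and correctly isolate the surplus $2\,\frac{dS_1}{dz}\frac{dS_m}{dz}$ that appears when the quadratic sum is opened from $a,b\ge 2$ to $a,b\ge 0$ after the stand-alone $dS_{m+1}$ term has been folded in, so that it cancels — via \eqref{eq:consistency} — against the logarithmic-derivative contributions. Since \eqref{eq:Sm recursion} is valid only for $m\ge 2$, an \emph{honest} two-way equivalence also requires accounting for the low-order pieces of \eqref{eq:Sch3} that fall outside its range: the $\hbar^{0}$ and $\hbar^{1}$ coefficients are \eqref{eq:SCL} and \eqref{eq:consistency} (the first holds by the construction of $S_0$, the second is the hypothesis), while the $\hbar^{2}$ coefficient $(\star_1)$ pins down $S_2=\tfrac{1}{6}F_{0,3}(z,z,z)+F_{1,1}(z)$ and must be verified directly from the explicit $W_{0,3}$ and $W_{1,1}$ of Lemmas~\ref{lem:W03} and \ref{lem:W11}, together with the choice of $F_{0,2}$ (equivalently of $S_1$) forced by \eqref{eq:consistency}.
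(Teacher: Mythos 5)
Your proposal is correct and takes essentially the same route as the paper's proof: both transport the $\hbar^{m+1}$-coefficient of \eqref{eq:Sch3} to the $z$-coordinate via \eqref{eq:d/dx}, absorb the stand-alone $dS_{m+1}/dz$ term of \eqref{eq:Sm recursion} into the $a=0,b=0$ part of the quadratic sum, and use \eqref{eq:consistency} to cancel the leftover $\bigl(\tfrac{d}{dz}\log y+2\tfrac{dS_1}{dz}\bigr)\tfrac{dS_m}{dz}$, which is precisely the paper's computation of the coefficient of $dS_m/dz$. Your additional remark that the $\hbar^{2}$-coefficient $(\star_1)$, which involves $S_2$, falls outside the range $m\ge 2$ of \eqref{eq:Sm recursion} and would require a direct check against the explicit $W_{1,1}$ and $W_{0,3}$ of Lemmas~\ref{lem:W11} and \ref{lem:W03} is a careful point that the paper's own proof passes over silently, though you leave that verification unperformed.
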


\begin{proof}
First
we calculate the second differential operator,
from (\ref{eq:d/dx}) and (\ref{eq:S0}):
\begin{equation*}
\frac{d^2}{dx^2} = \frac{d}{dx}
\left(\frac{S_0'}{h}\frac{d}{dz}\right)
=
\frac{(S_0')^2}{h^2}\frac{d^2}{dz^2}
+\frac{S_0'}{h}\frac{d}{dz}\left(\frac{S_0'}{h}
\right) \cdot \frac{d}{dz},
\end{equation*}
denoting by $S_0'=dS_0/dx$.
The $\hbar^{m+1}$-terms of (\ref{eq:Sch3})
then produce
\begin{equation}
\label{eq:hbar m+1}
\frac{(S_0')^2}{h^2}
\left(
\frac{d^2}{dz^2} S_m +
\sum_{a+b=m+1} \frac{dS_a}{dz}\frac{dS_b}{dz}
\right)
+
\frac{S_0'}{h}\frac{d}{dz}\left(\frac{S_0'}{h}
\right) \cdot \frac{dS_m}{dz}
=0.
\end{equation}
The coefficients of  
$dS_m/dz$ in (\ref{eq:hbar m+1})
are
\begin{multline*}
2 \frac{(S_0')^2}{h^2}\frac{dS_1}{dz}
+\frac{S_0'}{h}\frac{d}{dz}\left(\frac{S_0'}{h}
\right)
=
2 \frac{(S_0')^2}{h^2}\frac{h}{S_0'} S_1'
+\frac{d}{dx}\left(\frac{S_0'}{h}
\right)
\\
=\frac{1}{h}
\big(
2S_0'S_1'+S_0''
\big)
+
S_0'\frac{d}{dx}\left(\frac{1}{h}\right)
=
S_0'\frac{d}{dx}\left(\frac{1}{h}\right)
\\
=
\frac{(S_0')^2}{h^2}\cdot 2h\frac{d}{dz}\left(
\frac{1}{2h}\right).
\end{multline*}
This is exactly what the last term of 
(\ref{eq:Sm recursion}) has, after
adjusting the overall coefficient of
$\frac{(S_0')^2}{h^2}\cdot 2h$. This complets
the proof of Lemma.
\end{proof}

With the above lemma, we have competed the proof of 
the main theorem.
\end{proof}

\begin{rem}
The Schr\"odinger equation (\ref{eq:Sch1})
has a holomorphic coefficient $s_2(x)$. Therefore,
the solution is also holomorphic. 
The expression (\ref{eq:Psi}) is therefore valid
only for points away from the caustics. In other
words, the WKB method is not valid at 
the caustics. 
The local behavior of $\Psi(z,\hbar)$ 
at every Lagrangian singularity is 
universal, because $s_2(x)$ has a simple zero
at each point $p\in R_s$ of the
caustics. Here recall that $R_s = \Sigma_s\cap C$,
so $R_s$ is also  the branch divisor in $C$.
If we have chosen a local coordinate $x$ of $C$
at $p\in R_s$ so that $x=0$ gives the point  $p$,
then on a small neighborhood of $p$ we
have an expression $s_2(x) =-x$.
Since the differential equation becomes
$$
\left(\hbar^2 \frac{d^2}{dx^2}-x\right)
\Psi(x,\hbar)=0,
$$
it is obvious that the local 
solution is given by the Airy function
(see for example, \cite{AF}).
This calculation has been carried out 
in \cite{A,EB}. The spectral curve in this 
case is locally $x=y^2$, for which the
Eynard-Orantin theory produces the
cotangent
$\psi$-class intersection numbers
considered by Witten \cite{W1991} and
Kontsevich \cite{K1992}. See for example,
\cite{DMSS}, on this connection.
\end{rem}

\hyphenation{Gar-ou-fal-i-dis}

\begin{ack}
The authors are grateful to 
Bertrand Eynard, for his continuous inspiration
and encouragements to them on the topics 
that led to the 
current work.
The authors also thank  
Albert Schwarz for his enlightening seminar talk
at UC Davis, which triggered them to work 
on the quantization of Hitchin spectral 
curves, and  Xiaojun Liu and
Axel Saenz for many discussions. M.M.\ thanks
Philip Boalch,
Boris Dubrovin,
Stavros Garoufalidis,
Tam\'as Hausel,
John Hunter,
Jerry Kaminker,
Maxim Kontsevich,
Sergei Lando,
Marta Mazzocco,
Paul Norbury,
Brad Safnuk,
Sergey Shadrin, and
Piotr Su\l kowski
for useful comments and discussions. 
The research of O.D.\ has been supported by
the Arthur J.~Krener fund in the University of 
California, Davis, and GRK 1463 \emph{Analysis,
Geometry, and String Theory} at the 
Leibniz Universit\"at 
 Hannover.
During the preparation of this
paper,
M.M.\ has received research  support  
from many institutions, which is gratefully
acknowledged: 
l'Institut des Hautes \'Etudes Scientifiques in
 Bures-sur-Yvette, 
Max-Planck Institut f\"ur Mathematik in Bonn,
la Scuola Internazionale Superiore di Studi Avanzati 
in Trieste, 
 the American Institute of Mathematics in Palo Alto,
 the Center for Quantum Geometry of Moduli 
 Spaces at  Aarhus University, 
 Instituto Superior T\'ecnico in Lisbon,
the University of Melbourne, 
and  Universiteit van Amsterdam. 
The research of M.M.\ has been supported 
by NSF grants DMS-1104734 and DMS-1309298.
\end{ack}


\providecommand{\bysame}{\leavevmode\hbox to3em{\hrulefill}\thinspace}

\bibliographystyle{amsplain}

\end{document}